\pgfplotsset{width=10cm,compat=1.9}
\newtheorem{theorem}{Theorem}[section]
\newtheorem{definition}[theorem]{Definition}
\newtheorem{lemma}[theorem]{Lemma}
\newtheorem{corollary}[theorem]{Corollary}
\newtheorem{remark}[theorem]{Remark}
\newcommand{\R}{\mathbb{R}} 
\newcommand{\E}{\mathbb{E}}
\newcommand{\Sp}{\mathbb{S}^{d-1}}
\newcommand{\dint}{\mathrm{d}}
\newcommand{\NN}{N^{-\frac{2}{d-1}}}
\DeclareMathOperator{\vol}{vol}
\DeclareMathOperator{\Gr}{Gr}
\DeclareMathOperator{\dist}{dist}
\newcommand{\bbinom}[2]{\left[\!\!\begin{array}{c}#1\\#2\end{array}\!\!\right]}
\title[Approximation of the Euclidean ball by polytopes]{Approximation of the Euclidean ball by polytopes\\with a fixed number of $k$-faces}
\author{Steven Hoehner, Carsten Sch\"utt and Elisabeth M. Werner}
\date{\today}
\begin{document}

\setcounter{footnote}{0}

\begin{abstract}\noindent
We  derive lower estimates for the approximation of the $d$-dimensional Euclidean ball by polytopes with a fixed number of $k$-dimensional faces, $k\in\{0,1,\ldots,d-1\}$. The metrics considered include the intrinsic volume difference and the Hausdorff metric. In the case of inscribed and circumscribed polytopes, our main results extend the previously obtained bounds from $k=0$ and $k=d-1$, respectively, to half of the $f$-vector of the approximating polytope. For arbitrarily positioned polytopes, we also improve a special case of a result of K. J. B\"or\"oczky ({\it J. Approx. Theory}, 2000) by a factor of dimension. This paper addresses a question of P. M. Gruber ({\it Convex and Discrete Geometry}, p. 216), who asked for results on the approximation of convex bodies by polytopes with a fixed number of $k$-faces when $1\leq k\leq d-2$.
\end{abstract}

\maketitle

\renewcommand{\thefootnote}{}
\footnotetext{2020 \emph{Mathematics Subject Classification}: 52A27 (52A39, 52B11)}

\footnotetext{\emph{Key words and phrases}:  Approximation, polytope, Euclidean ball, intrinsic volume, face, $f$-vector}
\renewcommand{\thefootnote}{\arabic{footnote}}
\setcounter{footnote}{0}

\section{Introduction}

The approximation of convex bodies by polytopes is a fundamental problem in convex and discrete geometry. This problem is concerned with quantifying how well a complex object, such as the $d$-dimensional Euclidean ball, can be approximated by a simpler geometric shape, such as a polytope. Historically, two main models have been used to study this problem:
\begin{itemize}
    \item {\bf Inscribed polytopes with a fixed number of vertices.} In this model, the polytope is the convex hull of a finite set of points that are all contained within the convex body.

    \item {\bf Circumscribed polytopes with a fixed number of facets.} In this model, the polytope is the intersection of a finite number of halfspaces, each of which contains the convex body. A common approach is to select the normals of these halfspaces from the boundary of the convex body.
\end{itemize}

While extensive research exists for these two standard models, there is a significant gap in the literature regarding the approximation of convex bodies by polytopes with a fixed number of $k$-faces, where $1\leq k\leq d-2$. This is a long-standing question posed by P. M. Gruber in his book \cite{GruberBook} (see page 216). The present paper addresses Gruber's question directly by providing new lower bounds on the approximation of the Euclidean ball by polytopes with a fixed number of $k$-faces. Our main results extend previously established bounds for inscribed and circumscribed polytopes (which correspond to $k=0$ and $k=d-1$, respectively, in the standard models of approximation) to a broad range of intermediate $k$-faces. 
For the intermediate range $1\leq k\leq d-2$, a few results are known. In dimension $d=3$, asymptotic formulas have been determined for the approximation of smooth convex bodies by inscribed polytopes with a restricted number of edges under the Hausdorff metric \cite{Boroczky-Fodor-Vigh} and the symmetric difference metric \cite{Boroczky-Gomis-Tick}. In general dimensions $d\geq 4$, however, very little is known. K. J. B\"or\"oczky \cite{Boroczky-2000} derived lower bounds for the symmetric difference metric and Hausdorff distance of a smooth convex body and an arbitrarily positioned polytope with a restricted number of $k$-faces. Besau, Hoehner and Kur \cite{BHK} provided lower bounds for the asymptotic best approximation of the Euclidean ball by inscribed or circumscribed polytopes with a fixed number of $k$-faces under the intrinsic volume difference, under the assumption that the polytopes are simplicial. Gruber also posed related problems on typical faces of best-approximating polytopes in asymptotic approximations \cite{GruberIV, Gruber2001}. There are some results in this direction when the number of edges is restricted, see, e.g., \cite{BoroczkyTickWintsche2007,Vigh2009} as well as \cite{LRSW}.

In this paper, we provide answers to these questions and give optimal estimates for the approximation of the Euclidean ball by polytopes with a restricted number of $k$-faces. Our methods combine tools from the combinatorial theory of polytopes, polar duality and  isoperimetric inequalities. Our results extend and generalize the aforementioned results of \cite{BHK}, removing the assumption that the approximating polytopes are simplicial while extending the range of $k$ considered. We also provide lower bounds for the symmetric difference metric and surface area deviation of the Euclidean ball and an arbitrarily positioned polytope for $k\in\{\lceil d/2\rceil-1,\ldots,d-1\}$. This extends results of Ludwig, Sch\"utt and Werner \cite{LSW} and of Hoehner, Sch\"utt and Werner \cite{HSW}, respectively, from $k=d-1$ to all $k\in\{\lceil d/2\rceil-1,\ldots,d-1\}$. In particular, in the special case of the Euclidean ball and for $k\in\{\lceil d/2\rceil-1,\ldots,d-1\}$, our result on the symmetric difference metric improves an estimate of B\"or\"oczky \cite{Boroczky-2000} by a factor of dimension. We also provide lower bounds for the Hausdorff approximation of the Euclidean ball by inscribed or circumscribed polytopes with a restricted number of $k$-faces, for any $k\in\{0,1,\ldots,d-1\}$.

\subsection{Overview of the paper}

First, in Subsection \ref{background}, we briefly discuss the background and notation that will be used throughout the paper. Our main results are stated in Section \ref{mainresults-sec}, and the proofs of our main results are in Sections \ref{mw-vol-sec}, \ref{asymptotic-section} and  \ref{mainThm-sec}. Next, in Section \ref{hausdorff-sec}, we state and prove results on the Hausdorff approximation of the Euclidean ball by inscribed or circumscribed polytopes with a fixed number of $k$-faces. In Section \ref{sec-discussion}, we briefly discuss some open problems and conjectures. We conclude the paper with an appendix in Section \ref{sec-appendix}, which contains the proofs of some lemmas that are used in the proofs of some of our main results.

\subsection{Background and notation}\label{background}

The standard inner product on $\R^d$ is written $\langle x,y\rangle=\sum_{i=1}^d x_i y_i$, where $x=(x_1,\ldots,x_d), y=(y_1,\ldots,y_d)\in\R^d$. The Euclidean norm of $x\in\R^d$ is $\|x\|_2=\sqrt{\langle x,x\rangle}$. The $d$-dimensional Euclidean unit ball in $\R^d$ centered at the origin $o$ is $B_d=\{x\in\R^d:\,\|x\|_2\leq 1\}$. Its boundary, $\partial B_d$, is the unit sphere in $\R^d$, i.e., $\Sp=\{x\in\R^d:\,\|x\|_2=1\}$. We write $\kappa_j$, $1 \leq j \leq d$, for the $j$-dimensional volume of the $j$-dimensional Euclidean unit ball $B_j$ and $\omega_j$ for the  $(j-1)$-dimensional volume of $\partial B_j$.
Throughout the paper, we let $c,c_1,c_2,C,\ldots$ denote positive absolute constants whose values may change from line to line. If a positive constant depends on the dimension $d$, the number of $k$-faces $M$ of a polytope, or some other parameter, we will denote this dependence explicitly. For example, $c(d)$ denotes a positive constant that only depends on the dimension.
   
A polytope is the convex hull of a finite point set in $\R^d$. Let $P$ be a polytope in $\R^d$. A set $F\subset P$ is a \emph{(proper) face} of $P$ if $F=P\cap H$, where $H$ is a supporting hyperplane of $P$. If the dimension of a face $F$ of $P$ is $k\in\{0,1,\ldots,d-1\}$, then we say that $F$ is a \emph{$k$-dimensional face} (or \emph{$k$-face})  of $P$. In particular, the 0-dimensional, 1-dimensional and $(d-1)$-dimensional faces of $P$ are called the vertices, edges and facets of $P$, respectively.  For $k\in\{0,1,\ldots,d-1\}$ and a polytope $P$ in $\R^d$,  let $\mathcal{F}_k(P)$ denote the set of all $k$-dimensional faces of $P$. For more background on convex polytopes, we refer the reader to, for example, the books \cite{Brondsted,Grunbaum}.

A key ingredient in our proofs is the following pair of combinatorial inequalities.  Let $P$ be a convex polytope in $\R^d$ and for $k\in\{0,1,\ldots,d-1\}$, let $f_k(P)$ denote the number of $k$-dimensional faces of $P$. \begin{align}
f_k(P) &\geq  f_0(P),\quad k\in\left\{0,1,\ldots,\lfloor\tfrac{d}{2}\rfloor\right\} \label{vertices-bd}\\
f_k(P) &\geq f_{d-1}(P), \quad k\in\left\{\lceil\tfrac{d}{2}\rceil-1,\ldots, d-1\right\}. \label{facets-bd}
\end{align}

A convex body is a convex, compact subset of $\R^d$ with nonempty interior. We now recall several notions of convex bodies. For details we refer to, for example, the book \cite{Schneiderbook}. The intrinsic volumes of a convex body $K$ in $\R^d$ are defined as the coefficients in Steiner's formula:
\[
\forall\varepsilon\geq 0,\quad \vol_d(K+\varepsilon B_d)=\sum_{j=0}^d \varepsilon^{d-j} \kappa_{d-j} V_j(K).
\]
In particular, $V_d(K)=\vol_d(K)$ is the volume of $K$, $V_{d-1}(K)=\frac{1}{2}\vol_{d-1}(\partial K)$ is half the surface area of $K$, and $V_1(K)$ is a constant multiple of the mean width of $K$. Recall that the mean width $w(K)$ of $K$ is 
\begin{equation}\label{width}
w(K)=\frac{2}{\omega_d}\int_{\Sp}h_K(u)\,du,
\end{equation} 
where $h_K(u)=\max_{x\in K}\langle x,u\rangle$ is the support function of $K$ in the direction $u\in\Sp$. Then  $V_1(K)=\frac{\omega_{d}}{2 \kappa_{d-1}}w(K)$.

Kubota's integral formula states that
\begin{equation}\label{kubota}
    V_j(K)=\bbinom{d}{j}
        \int_{\Gr(d,j)}\vol_j(K|H)\,\dint \nu_j(H),
\end{equation}
where $\Gr(d,j)$ is the Grassmannian of all $j$-dimensional subspaces of $\R^d$, $K|H$ is the orthogonal projection of $K$ onto $H$, $\nu_j$ is the unique Haar probability measure on $\Gr(d,j)$, and 
\[
\bbinom{d}{j} := \binom{d}{j} \frac{\kappa_d}{\kappa_{j} \kappa_{d-j}}
\]
denotes the \emph{flag coefficient} of Klain and Rota \cite{Klain-Rota-book}. 

The intrinsic volumes of a convex body $K$ in $\R^d$ satisfy the extended isoperimetric inequality, which states that
\begin{equation}\label{extisoineq}
\forall j\in\{1,\ldots,d\},\quad \left(\frac{\vol_d(K)}{\kappa_d}\right)^{1/d}\leq \cdots \leq \left(\frac{V_j(K)}{V_j(B_d)}\right)^{1/j} \leq\cdots\leq \frac{V_1(K)}{V_1(B_d)}.
\end{equation}
For more background on intrinsic volumes, we refer the reader, again,  to the book \cite{Schneiderbook}.

For convex bodies $K$ and $L$ in $\R^d$ and $j\in\{1,\ldots,d\}$, Besau and Hoehner \cite{BH-2022} defined the $j$th \emph{intrinsic volume metric} $\delta_j(K,L)$ by
\begin{equation}\label{def:metric}
    \delta_j(K,L) := \bbinom{d}{j} \int_{\Gr(d,j)} \vol_j((K|H)\triangle (L|H))\, \dint\nu_j(H).
\end{equation}
Here $A\triangle B$ denotes the symmetric difference of sets $A$ and $B$. In particular, $\delta_d(K,L)=\vol_d(K\triangle L)$ is the symmetric difference metric. It was shown in \cite[Theorem 2.1]{BH-2022} that $\delta_j$ is a metric on the class $\mathcal{K}^d$ of convex bodies in $\R^d$, is continuous with respect to the Hausdorff metric, is rigid motion invariant and is positively $j$-homogeneous.  In the special case $K\subset L$, \eqref{def:metric} reduces to the \emph{intrinsic volume difference}
\[
\delta_j(K,L) = V_j(L)-V_j(K).
\]
Note that if $K\subset L$, then $\delta_1(K,L)$ is a constant multiple of the mean width difference
\[
\Delta_w(K,L) := w(L)-w(K).
\]

The intrinsic volume metric has been studied in the approximation of convex bodies by polytopes. For more background, we refer the reader to  \cite{BH-2022, BHK} and the references therein. For more background on the asymptotic best approximation of convex bodies by polytopes, see, for example, \cite{BH-2022,Bronshtein-survey,Hoehner-survey,HK-DCG,HSW,Hoehner2016,Kur2017,Ludwig1999,LSW} and the references therein. 

\section{Main Results}\label{mainresults-sec}

\subsection{General lower bounds}

The next two results are interesting because they are nonasymptotic in nature; there is no minimum number of $k$-faces required to obtain these bounds.

\begin{theorem}\label{inscribed-mw-cor-kfaces}
    Let $d\geq 2$ and $k\in\{0,1,\ldots,\lfloor\frac{d}{2}\rfloor\}$. For all polytopes $P_M\subset B_d$ with at most $M$ $k$-faces that contain $o$ in their interiors, we have
    \[
    \Delta_w(B_d,P_M) \geq \frac{1}{4}\left(\frac{\omega_d}{4  \kappa_{d-1}}\right)^{\frac{2}{d-1}}M^{-\frac{2}{d-1}}.
    \]
\end{theorem}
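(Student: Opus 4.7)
The plan is to use Hinman's inequality \eqref{vertices-bd} to convert the bound on $k$-faces into a bound on the number of vertices, pass to an auxiliary polytope with vertices on the sphere, and then estimate the mean-width deficit by a spherical covering argument.

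First, \eqref{vertices-bd} gives $f_0(P_M)\leq f_k(P_M)\leq M$ for $k\in\{0,\ldots,\lfloor d/2\rfloor\}$. Writing $v_1,\ldots,v_N$ for the vertices of $P_M$ (with $N\leq M$ and each $v_i\neq o$, since $o\in\mathrm{int}(P_M)$), I would set $\bar v_i:=v_i/\|v_i\|_2\in\Sp$ and $Q:=\conv(\bar v_1,\ldots,\bar v_N)\subset B_d$. Writing $o=\sum\lambda_i v_i=\sum(\lambda_i\|v_i\|_2)\bar v_i$ and rescaling the nonnegative coefficients to sum to $1$ shows $o\in Q$, whence each $v_i=\|v_i\|_2\,\bar v_i+(1-\|v_i\|_2)\,o$ lies in $Q$ and $P_M\subset Q$. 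By monotonicity of mean width under inclusion, $\Delta_w(B_d,P_M)\geq\Delta_w(B_d,Q)$, so it suffices to bound the mean-width deficit of the spherical polytope $Q$.

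By \eqref{width}, $\Delta_w(B_d,Q)=\frac{2}{\vol_{d-1}(\partial B_d)}\int_{\Sp}(1-h_Q(u))\,du$. Since cosine is decreasing on $[0,\pi]$, $h_Q(u)=\max_i\langle\bar v_i,u\rangle=\cos r(u)$, where $r(u)$ is the spherical distance from $u$ to its nearest vertex $\bar v_i$. A layer-cake identity gives
\[
\int_{\Sp}(1-\cos r(u))\,du=\int_0^\pi\sin t\cdot\sigma(\{u\in\Sp:r(u)>t\})\,dt,
\]
where $\sigma$ denotes the surface measure on $\Sp$. Since $\{r\leq t\}$ is covered by the $N$ spherical caps of angular radius $t$ centered at the $\bar v_i$, and $\sigma(C(t))=(d-1)\vol_{d-1}(B_{d-1})\int_0^t\sin^{d-2}\theta\,d\theta\leq\vol_{d-1}(B_{d-1})\,t^{d-1}$ (using $\sin\theta\leq\theta$), one obtains $\sigma(\{r>t\})\geq\vol_{d-1}(\partial B_d)-M\vol_{d-1}(B_{d-1})\,t^{d-1}$. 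Choosing $t_0$ so that $M\vol_{d-1}(B_{d-1})\,t_0^{d-1}=\tfrac12\vol_{d-1}(\partial B_d)$ forces $\sigma(\{r>t\})\geq\tfrac12\vol_{d-1}(\partial B_d)$ throughout $[0,t_0]$.

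Integrating $\sin t$ against this bound yields $\Delta_w(B_d,Q)\geq 1-\cos t_0$, and a Jordan-type inequality such as $1-\cos t\geq t^2/4$ then gives $\Delta_w(B_d,Q)\geq t_0^2/4=\tfrac14\bigl(\vol_{d-1}(\partial B_d)/(2M\vol_{d-1}(B_{d-1}))\bigr)^{2/(d-1)}$, which is at least the stated right-hand side. The only delicate point is the regime in which $t_0$ approaches or exceeds $\pi$; the defining equation for $t_0$ then forces $M$ to be very small, and in that range the Jordan inequality is applied on the truncated interval $[0,\min(t_0,\pi)]$ and the estimate is reconciled with the trivial bound $\Delta_w\leq w(B_d)=2$. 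The real content of the argument is thus the reduction furnished by \eqref{vertices-bd}, which collapses the combinatorial $k$-face constraint into a vertex constraint amenable to the classical spherical covering estimate for the mean width.
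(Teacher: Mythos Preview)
Your proof is correct and takes a genuinely different route from the paper. Both begin with Hinman's inequality \eqref{vertices-bd} to reduce to a vertex bound. The paper then passes to the polar polytope $P_N^\circ\supset B_d$, uses the identity $h_{P_N}=\rho_{P_N^\circ}^{-1}$, and shows that the set $\{u:\,h_{P_N}(u)\leq 1/(1+\eta)\}$ has large spherical measure by estimating the radial projection of each facet of $P_N^\circ$ intersected with a thin shell about $\partial B_d$. Your approach instead pushes the vertices radially onto the sphere to obtain $Q\supset P_M$, writes $h_Q(u)=\cos r(u)$ with $r(u)$ the spherical distance to the nearest projected vertex, and applies a layer-cake identity together with a spherical-cap covering bound. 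Your route is more elementary in that it avoids polarity and the facet structure of the dual entirely, and it even yields a slightly sharper constant (denominator $2$ rather than $4$ inside the fractional power). The paper's detour through polarity, on the other hand, makes the link to the circumscribed volume problem (Theorem~\ref{circumscribed-vol-cor-kfaces}) immediate via the Glasauer--Gruber inequality \eqref{polarity-formula-1}, so it is not wasted effort.

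One small remark: the ``delicate point'' about $t_0$ approaching $\pi$ is in fact vacuous. Since any full-dimensional polytope has at least $d+1$ vertices, Hinman's bound forces $M\geq d+1$; combining this with $\vol_{d-1}(\partial B_d)/\vol_{d-1}(B_{d-1})\leq\sqrt{2\pi d}$ gives $t_0^{d-1}\leq\sqrt{2\pi d}/(2(d+1))<1$, so $t_0<1$ always. Hence your inequality $1-\cos t_0\geq t_0^2/4$ (which holds on $[0,\sqrt{6}]$ by the alternating Taylor bound $1-\cos t\geq t^2/2-t^4/24$) applies without any case distinction.
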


\begin{theorem}\label{circumscribed-vol-cor-kfaces}
    Let $d\geq 2$ and $k\in\{\lceil d/2\rceil-1,\ldots,d-1\}$. For all polytopes $P_M\supset B_d$ with at most $M$ $k$-faces, we have
    \[
    \vol_d(P_M\setminus B_d) \geq \frac{\omega_{d}}{8}\left(\frac{\omega_{d}}{4 \kappa_{d-1}}\right)^{\frac{2}{d-1}}M^{-\frac{2}{d-1}}.
    \]
\end{theorem}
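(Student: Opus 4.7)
\emph{Plan.} The strategy is to reduce, via Hinman's inequality \eqref{facets-bd}, to the classical situation of a polytope with at most $M$ facets circumscribed around $B_d$, and then to establish a direct volume lower bound by radial decomposition, a pointwise trigonometric estimate, and spherical rearrangement. Since $k\in\{\lceil d/2\rceil-1,\ldots,d-1\}$, \eqref{facets-bd} gives $f_{d-1}(P_M)\leq f_k(P_M)\leq M$. Writing $P_M=\bigcap_{i=1}^{N}\{x:\langle x,u_i\rangle\leq h_i\}$ with $N\leq M$, unit outer normals $u_i$, and $h_i\geq 1$ (because $P_M\supset B_d$), the polytope $P':=\bigcap_{i=1}^{N}\{x:\langle x,u_i\rangle\leq 1\}$ is contained in $P_M$, still contains $B_d$, and has at most $N$ facets. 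Since $\vol_d(P'\setminus B_d)\leq \vol_d(P_M\setminus B_d)$, I may assume each facet hyperplane of $P_M$ is tangent to $\Sp$ at the point $u_i$.

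Next, I would perform a radial decomposition. Let $\Omega_i\subset \Sp$ denote the radial projection of the facet $F_i$; the $\Omega_i$ partition $\Sp$ up to a null set, and for $u\in\Omega_i$ the outgoing ray from $o$ in direction $u$ exits $P_M$ at distance $\sec\theta_i(u)$, where $\theta_i(u)=\arccos\langle u,u_i\rangle\in[0,\pi/2)$. Polar coordinates give
\[
\vol_d(P_M\setminus B_d)=\frac{1}{d}\sum_{i=1}^{N}\int_{\Omega_i}\bigl(\sec^d\theta_i(u)-1\bigr)\,du.
\]
Convexity of $t\mapsto t^d$ on $[1,\infty)$ combined with the elementary bound $1-\cos\theta\geq\theta^2/4$ on $[0,\pi/2]$ yields $\sec^d\theta-1\geq d(1-\cos\theta)\geq d\theta^2/4$, hence
\[
\vol_d(P_M\setminus B_d)\geq\frac{1}{4}\sum_{i=1}^{N}\int_{\Omega_i}\theta_i(u)^2\,du.
\]

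The final step uses spherical rearrangement: since $\theta_i^2$ is nondecreasing in geodesic distance from $u_i$, the bathtub principle gives $\int_{\Omega_i}\theta_i^2\,du\geq\int_{D_i}\theta^2\,du$, where $D_i$ is the geodesic disk centered at $u_i$ with $|D_i|=|\Omega_i|$. A Markov-type split of $D_i$ at a radius $\alpha_0$ chosen via the elementary cap bound $|D(\alpha)|\leq\vol_{d-1}(B_{d-1})\alpha^{d-1}$ (so that $|D(\alpha_0)|\leq|\Omega_i|/2$) produces a lower estimate of the form
\[
\int_{D_i}\theta^2\,du\;\geq\; c\,|\Omega_i|^{(d+1)/(d-1)}\vol_{d-1}(B_{d-1})^{-2/(d-1)}
\]
with an explicit absolute constant $c$. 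Summing over $i$ and applying Jensen's inequality to the convex function $x\mapsto x^{(d+1)/(d-1)}$, subject to $\sum_i|\Omega_i|=\vol_{d-1}(\partial B_d)$ and $N\leq M$, yields $\sum_i|\Omega_i|^{(d+1)/(d-1)}\geq \vol_{d-1}(\partial B_d)^{(d+1)/(d-1)}M^{-2/(d-1)}$, and the stated bound follows after collecting constants. The main obstacle I anticipate is tuning the Markov split carefully enough to match the factors $\tfrac{1}{8}$ and the $4$ inside the flag exponent of the statement exactly; the scaling in $M$, $\vol_{d-1}(\partial B_d)$, and $\vol_{d-1}(B_{d-1})$ is dictated by dimensional analysis of the argument, but a slightly sub-optimal cutoff radius can easily lose a factor of $2^{2/(d-1)}$ here.
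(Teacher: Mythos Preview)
Your argument is correct and, carried through as you sketch it, actually yields a slightly sharper constant than the stated theorem (a $2$ in place of the $4$ inside the parenthesis), so your worry about tuning the Markov cutoff is unfounded. However, your route differs substantially from the paper's. You reduce via Hinman's inequality \eqref{facets-bd} to a circumscribed polytope with at most $M$ tangent facets and then argue directly: radial decomposition over the facet cells $\Omega_i$, the pointwise bound $\sec^d\theta-1\ge d\theta^2/4$, spherical rearrangement (bathtub), and Jensen on $x\mapsto x^{(d+1)/(d-1)}$. The paper instead passes to the polar: since $f_k(P_M)=f_{d-1-k}(P_M^\circ)$ and $d-1-k\in\{0,\ldots,\lfloor d/2\rfloor\}$, Theorem~\ref{inscribed-mw-cor-kfaces} gives a lower bound on $\Delta_w(B_d,P_M^\circ)$, which is converted to a volume bound for $P_M\setminus B_d$ via the Glasauer--Gruber integral identity $w(B_d)-w(P)=\tfrac{2}{\vol_{d-1}(\partial B_d)}\int_{P^\circ\setminus B_d}\|x\|_2^{-(d+1)}\,dx\le\tfrac{2}{\vol_{d-1}(\partial B_d)}\vol_d(P^\circ\setminus B_d)$. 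Your approach is more self-contained and avoids both the polarity step and the external Glasauer--Gruber formula; the paper's approach is shorter once Theorem~\ref{inscribed-mw-cor-kfaces} is in hand and highlights the duality between the mean-width and volume problems.
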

Note that by Stirling's formula,
\[
\left(\frac{\omega_{d}}{4 \kappa_{d-1}}\right)^{\frac{2}{d-1}}=1+O\left(\frac{\ln d}{d}\right).
\]
Theorems \ref{inscribed-mw-cor-kfaces} and \ref{circumscribed-vol-cor-kfaces} are proved in Section \ref{mw-vol-sec}. The absolute constants $1/4, 1/8$, etc. appearing in these theorems are not the best possible and can be optimized; see Section \ref{asymptotic-section} for further explanation.

Using the extended isoperimetric inequality (\ref{extisoineq}), we derive the following estimates for the intrinsic volume difference of the Euclidean ball and an inscribed or circumscribed polytope. 

\begin{theorem}\label{mainThm}
        Let  $d\geq 2$ be given, and fix $j\in \{1,\ldots,d\}$. 
        \begin{itemize}
        \item[(i)]
        Let  $k\in\{0,1,\ldots,\lfloor d/2\rfloor\}$. For all polytopes $P_M\subset B_d$ with at most $M$ $k$-faces and with $o\in\operatorname{int}(P_M)$, we have
    \begin{align*}
    \delta_j(P_M,B_d) \geq \frac{j}{4}\left(\frac{\omega_{d}}{4\kappa_{d-1}}\right)^{\frac{2}{d-1}}V_j(B_d)M^{-\frac{2}{d-1}}
     \left(1-\frac{j-1}{8}\left(\frac{\omega_{d}}{4\kappa_{d-1}}\right)^{\frac{2}{d-1}}M^{-\frac{2}{d-1}}\right). 
    \end{align*}
In particular, let $c\in(0,1)$. Then for all $M\geq \frac{\omega_{d}}{4\kappa_{d-1}}\left(\frac{j-1}{8(1-c)}\right)^{\frac{d-1}{2}}$, we have
    \[
   \delta_j(P_M,B_d)\geq \frac{c\cdot j}{4}\left(\frac{\omega_{d}}{4\kappa_{d-1}}\right)^{\frac{2}{d-1}}V_j(B_d)M^{-\frac{2}{d-1}}.
    \]
        
        \item[(ii)] Let $k\in\{\lceil d/2\rceil-1,\ldots,d-1\}$. For all polytopes $P_M\supset B_d$ with at most $M$ $k$-faces, we have
     \begin{align*}
    \delta_j(P_M,B_d) \geq \frac{j}{8}\left(\frac{\omega_{d}}{4\kappa_{d-1}}\right)^{\frac{2}{d-1}}V_j(B_d)M^{-\frac{2}{d-1}}
     \left(1-\frac{d-j}{16}\left(\frac{\omega_{d}}{4\kappa_{d-1}}\right)^{\frac{2}{d-1}}M^{-\frac{2}{d-1}}\right). 
    \end{align*}
In particular, let $c\in(0,1)$. Then for all $M\geq \frac{\omega_{d}}{4\kappa_{d-1}}\left(\frac{d-j}{16(1-c)}\right)^{\frac{d-1}{2}}$, we have
    \[
   \delta_j(P_M,B_d)\geq \frac{c\cdot j}{8}\left(\frac{\omega_{d}}{4\kappa_{d-1}}\right)^{\frac{2}{d-1}}V_j(B_d)M^{-\frac{2}{d-1}}.
    \]
\end{itemize}
\end{theorem}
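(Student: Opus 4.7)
My plan is a common two-step template for both parts: first invoke the appropriate nonasymptotic bound (Theorem \ref{inscribed-mw-cor-kfaces} for (i), Theorem \ref{circumscribed-vol-cor-kfaces} for (ii)), then transfer it to a bound on $\delta_j$ via the extended isoperimetric inequality \eqref{extisoineq}, and finally extract the announced main term plus quadratic correction by a short Taylor estimate.

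For part (ii), I would set $s := V_d(P_M)/V_d(B_d) - 1 \geq 0$. Using $V_d(B_d) = \vol_{d-1}(\partial B_d)/d$, Theorem \ref{circumscribed-vol-cor-kfaces} rescales to $s \geq s_0 := \tfrac{d}{8}\bigl(\tfrac{\vol_{d-1}(\partial B_d)}{4\vol_{d-1}(B_{d-1})}\bigr)^{2/(d-1)} M^{-2/(d-1)}$. The isoperimetric chain $(V_j(P_M)/V_j(B_d))^{1/j} \geq (V_d(P_M)/V_d(B_d))^{1/d}$ yields $\delta_j(P_M,B_d) \geq V_j(B_d)\bigl[(1+s)^{j/d}-1\bigr]$. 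I would then prove
\[
(1+s)^{j/d} - 1 \;\geq\; \tfrac{j}{d}\,s\,\bigl(1-\tfrac{d-j}{2d}\,s\bigr), \qquad s\geq 0,
\]
by a one-line convexity check: $g(s) := (1+s)^{j/d} - 1 - \tfrac{j}{d}s + \tfrac{j(d-j)}{2d^2}s^2$ satisfies $g(0)=g'(0)=0$ and $g''(s) \geq 0$ (the latter because $j/d - 2 < 0$ makes $(1+s)^{j/d-2}$ decreasing in $s$). Monotonicity of the left side in $s$ then justifies substituting $s = s_0$. For part (i), an exactly parallel argument applies: set $t := 1 - V_1(P_M)/V_1(B_d)$; Theorem \ref{inscribed-mw-cor-kfaces} together with $V_1(K) = \bbinom{d}{1} w(K)$ and $w(B_d) = 2$ gives $t \geq t_0$ for an explicit $t_0$; the chain $(V_j(P_M)/V_j(B_d))^{1/j} \leq V_1(P_M)/V_1(B_d)$ gives $\delta_j \geq V_j(B_d)[1 - (1-t)^j]$; and the symmetric Taylor bound $1 - (1-t)^j \geq jt\bigl(1-\tfrac{j-1}{2}t\bigr)$, proved by the same convexity template applied to $h(t) := 1-(1-t)^j-jt+\binom{j}{2}t^2$, completes the argument after substituting $t_0$. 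The \emph{In particular} clauses in both parts reduce to the elementary computation of the threshold $M$ that makes the bracketed correction factor at least $c$.

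The main obstacle I anticipate is precise bookkeeping of constants: the leading coefficients $\tfrac{j}{4}$, $\tfrac{j}{8}$ and the correction coefficients $\tfrac{d-j}{8}$, $\tfrac{d-j}{16}$ must emerge correctly through three cascaded steps---translation of the one-index estimate to a relative error $\Delta V_1/V_1(B_d)$ or $\Delta V_d/V_d(B_d)$, raising to the appropriate power via the isoperimetric chain, and the final Taylor expansion. No individual step is conceptually delicate, but miscalculating any one of them spoils the precise numerical constants in the statement.
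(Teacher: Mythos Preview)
Your proposal is correct and follows essentially the same approach as the paper's proof: invoke Theorem \ref{inscribed-mw-cor-kfaces} (resp.\ Theorem \ref{circumscribed-vol-cor-kfaces}), feed the resulting relative error for $V_1$ (resp.\ $V_d$) into the extended isoperimetric inequality to get $\delta_j \geq V_j(B_d)\bigl[1-(1-t)^j\bigr]$ (resp.\ $\delta_j \geq V_j(B_d)\bigl[(1+s)^{j/d}-1\bigr]$), and finish with the quadratic Taylor estimate. The paper simply asserts the elementary inequalities $1-(1-x)^j \geq jx - \tbinom{j}{2}x^2$ and $(1+x)^r \geq 1 + rx - \tfrac{r(1-r)}{2}x^2$, whereas you supply the short convexity verification; otherwise the arguments are identical, including the monotonicity step that allows substituting the lower bound $t_0$ (resp.\ $s_0$) before applying the Taylor inequality.
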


\noindent Theorem \ref{mainThm} is proved in Section \ref{mainThm-sec}.

\begin{remark}
    For the known cases $k\in\{0,d-1\}$, comparing the inequalities in Theorem \ref{mainThm} with the asymptotic estimates in \cite[Theorem 1]{BHK}, we see that our inequalities give the best possible order in these cases, up to  absolute constant factors. 
\end{remark}

\begin{remark}
Note that the lower bounds in Theorem \ref{mainThm} hold for all $M\geq d+1$ in the following endpoint ranges of $j$. In Theorem \ref{mainThm}(i),  this occurs for small values of $j$, for instance when $j\leq 1+8(1-c)$. In Theorem \ref{mainThm}(ii), it occurs for large values of $j$, for instance when $j\geq d-16(1-c)$. Indeed, in these ranges the corresponding threshold for $M$ is bounded above by $\frac{\omega_d}{4\kappa_{d-1}}$, 
which is less than $d+1$ for every $d\geq 2$.
\end{remark}

\begin{remark}
Gordon, Reisner, and Sch\"utt \cite{GRS-umbrellas} showed that there are absolute constants $c_1,c_2>0$ such that for every $d\geq 2$ and every $N\geq (c_1 d)^{\frac{d-1}{2}}$, for all polytopes $P_N\subset B_d$ with at most $N$ vertices,
\begin{equation}\label{GRS-eqn}
\vol_d(B_d\setminus P_N) \geq c_2 d\kappa_d\NN.
\end{equation}
The case $k=0$ and $j=d$ in Theorem \ref{mainThm}(i) gives a lower bound of the same order under the same large-$N$ hypothesis, for inscribed polytopes that contain $o$ in their interiors. Indeed, since $\delta_d(P_N,B_d)=\vol_d(B_d\setminus P_N)$ for $P_N\subset B_d$, Theorem \ref{mainThm}(i) yields
\[
\vol_d(B_d\setminus P_N)
\geq
\frac{d\kappa_d}{4}\left(\frac{\omega_d}{4\kappa_{d-1}}\right)^{\frac{2}{d-1}}\NN
\left(1-\frac{d-1}{8}\left(\frac{\omega_d}{4\kappa_{d-1}}\right)^{\frac{2}{d-1}}\NN
\right).
\]
In particular, taking $c=1/2$ in Theorem \ref{mainThm}(i), for all $N\geq \frac{\omega_d}{4\kappa_{d-1}}\left(\frac{d-1}{4}\right)^{\frac{d-1}{2}}$, this gives
\[
\vol_d(B_d\setminus P_N)\geq cd\kappa_d\NN
\]
with an absolute constant $c>0$, which is the same order as \eqref{GRS-eqn}. Moreover, by Stirling's inequality the threshold in (i) can be expressed as $N\geq (Cd)^{\frac{d-1}{2}}$ for some absolute constant $C>0$, which is the same large-$N$ hypothesis as in \cite{GRS-umbrellas}. 
\end{remark}

\subsection{Inequalities for asymptotic best approximation}

Let $\mathcal{P}^{\rm in}_{d,k}(N)$ (respectively, $\mathcal{P}^{\rm out}_{d,k}(N)$) denote the set of all polytopes $P\subset B_d$ (respectively, $P\supset B_d$) such that $P$ has at most $N$ $k$-faces. We also define  the subsets $\mathcal{P}_{d,k}^{\rm in,s}(N):=\{P\in\mathcal{P}^{\rm in}_{d,k}(N):\,P\text{ is simplicial}\}$ and  $\mathcal{P}_{d,k}^{\rm out,s}(N):=\{P\in\mathcal{P}^{\rm out}_{d,k}(N):\,P\text{ is simplicial}\}$.

\begin{lemma}\label{inclusion-chain}
    For an integer $d\geq 2$ and $k\in\{0,1,\ldots,d-1\}$, let \[
    \mathscr{C}_{d,k}(N)\in\{\mathcal{P}^{\rm in}_{d,k}(N),\mathcal{P}^{\rm out}_{d,k}(N),\mathcal{P}^{\rm in,s}_{d,k}(N),\mathcal{P}^{\rm out,s}_{d,k}(N)\}.
    \]
    \begin{itemize}
        \item[(i)]  For any $k\in\{0,1,\ldots,\lfloor d/2\rfloor\}$, we have
    \[
\mathscr{C}_{d,0}(N) \supset \mathscr{C}_{d,k}(N).
    \]

    \item[(ii)] For any $k\in\{\lceil d/2\rceil-1,\ldots,d-1\}$, we have
    \[
\mathscr{C}_{d,d-1}(N) \supset \mathscr{C}_{d,k}(N).
    \]
    \end{itemize}
    
\end{lemma}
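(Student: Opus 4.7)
The plan is to reduce the inclusion claim directly to the combinatorial inequalities \eqref{vertices-bd} and \eqref{facets-bd}. The set $\mathscr{C}_{d,k}(N)$ imposes three kinds of conditions on a polytope $P$: a positional condition (either $P\subset B_d$ or $P\supset B_d$, depending on which of the four classes is chosen), a combinatorial type condition (either no restriction, or that $P$ is simplicial), and a face-count condition ($f_k(P)\leq N$). The first two conditions depend only on $P$ itself and do not reference $k$, so they are automatically preserved under the claimed inclusion. Hence the entire content of the lemma is to pass from the bound $f_k(P)\leq N$ to the corresponding bound $f_0(P)\leq N$ in part (i), or $f_{d-1}(P)\leq N$ in part (ii).

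For part (i), I would fix $k\in\{0,1,\ldots,\lfloor d/2\rfloor\}$ and pick an arbitrary $P\in\mathscr{C}_{d,k}(N)$. Inequality \eqref{vertices-bd} applied to $P$ yields
\[
f_0(P)\leq f_k(P)\leq N,
\]
so $P$ lies in the analogous class with $k$ replaced by $0$. Since $P\subset B_d$ (respectively $P\supset B_d$) and the property of being simplicial, if assumed for $\mathscr{C}_{d,k}(N)$, carry over verbatim, we conclude $P\in\mathscr{C}_{d,0}(N)$. Part (ii) is completely parallel: for $k\in\{\lceil d/2\rceil-1,\ldots,d-1\}$ and $P\in\mathscr{C}_{d,k}(N)$, inequality \eqref{facets-bd} gives
\[
f_{d-1}(P)\leq f_k(P)\leq N,
\]
and, again, the positional and simplicial conditions transfer unchanged, so $P\in\mathscr{C}_{d,d-1}(N)$.

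There is essentially no obstacle here beyond invoking the two inequalities \eqref{vertices-bd} and \eqref{facets-bd} correctly; the real combinatorial content is imported wholesale from Hinman's results \cite{Hinman1,Hinman2}. The only mildly pedantic point to verify is that each of the four choices of $\mathscr{C}_{d,k}(N)$ is handled uniformly, which follows from the observation above that the non-face-count conditions are intrinsic to $P$. This lemma then serves its intended role in the sequel: any lower bound for the approximation of $B_d$ valid over $\mathscr{C}_{d,0}(N)$ or $\mathscr{C}_{d,d-1}(N)$ (in the respective ranges of $k$) immediately transfers to the smaller class $\mathscr{C}_{d,k}(N)$.
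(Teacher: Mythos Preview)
Your proposal is correct and follows essentially the same approach as the paper: fix $P\in\mathscr{C}_{d,k}(N)$, apply \eqref{vertices-bd} (respectively \eqref{facets-bd}) to obtain $f_0(P)\leq f_k(P)\leq N$ (respectively $f_{d-1}(P)\leq f_k(P)\leq N$), and note that the positional and simplicial conditions are independent of $k$. The paper's proof is slightly terser but identical in substance.
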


\begin{proof}
    (i) Let $k\in\{0,1,\ldots,\lfloor d/2\rfloor\}$ be fixed. Take any $P\in\mathscr{C}_{d,k}(N)$. Then by definition, $f_k(P)\leq N$. Thus, by \eqref{vertices-bd}, we have $f_0(P)\leq f_k(P)\leq N$. Hence, $P$ has at most $N$ vertices, so $P\in\mathscr{C}_{d,0}(N)$. This proves (i). Part (ii) is shown in the same way, only now we use \eqref{facets-bd} instead of \eqref{vertices-bd}.
\end{proof}

In what follows, we set
\[
\delta_j(B_d,\mathscr{C}_{d,k}(N)):=\inf_{P_N\in\mathscr{C}_{d,k}(N)}\delta_j(B_d,P_N).
\]
It follows from a compactness argument that the infimum is achieved for some \emph{best-approximating polytope} $P_N^*\in\mathscr{C}_{d,k}(N)$.

 It was shown by Besau, Hoehner and Kur \cite[Corollary 2 ii)]{BHK} that for any fixed $k\in\{0,1,\ldots,\lfloor d/2\rfloor\}$, when considering simplicial polytopes the following asymptotic lower bound holds true:
\begin{equation}\label{BHK-asymptotic}
\liminf_{N\to\infty}\left\{N^{\frac{2}{d-1}}\delta_j(B_d,\mathcal{P}_{d,k}^{\rm in,s}(N)) \right\} \geq \frac{1}{2}\operatorname{div}_{d-1}\omega_{d}^{\frac{2}{d-1}}jV_j(B_d).
    \end{equation}
    In \cite[Corollary 2 iii)]{BHK}, they proved that for any fixed $k\in\{\lfloor 3(d-1)/4\rfloor,\ldots,d-1\}$, for circumscribed simplicial polytopes we have
    \begin{equation}\label{BHK-asymptotic-2}
\liminf_{N\to\infty}\left\{N^{\frac{2}{d-1}}\delta_j(B_d,\mathcal{P}_{d,k}^{\rm out,s}(N)) \right\} \geq \frac{1}{2}\operatorname{del}_{d-1}\omega_{d}^{\frac{2}{d-1}}jV_j(B_d).
    \end{equation}
    Here $\operatorname{del}_{d-1}$ and $\operatorname{div}_{d-1}$ are positive constants that depend only on the dimension $d$. They are connected with Delone triangulations and Dirichlet--Voronoi tilings in $\R^d$, respectively, and arise in formulas for the asymptotic best approximation of smooth convex bodies by polytopes. The best known asymptotic estimate for $\operatorname{del}_{d-1}$ is due to Mankiewicz and Sch\"utt \cite{MaS1}, who proved that  $\operatorname{del}_{d-1}=(2\pi e)^{-1}d+O(\ln d)$. An almost-sharp estimate for $\operatorname{div}_{d-1}$ was given by Hoehner and Kur \cite[Theorem 1.4]{HK-DCG}, who proved that $|\operatorname{div}_{d-1}-(2\pi e)^{-1}(d+\ln d)|=O(1)$. We give a new simplified proof of this result in Section \ref{asymptotic-section}.
    
In the next result, we drop the restriction that the approximating polytopes are simplicial and show that \eqref{BHK-asymptotic} and \eqref{BHK-asymptotic-2} still hold true; moreover, we extend the range of $k$ in \eqref{BHK-asymptotic-2} to $k\in\{\lceil d/2\rceil-1,\ldots,d-1\}$, which is about 25\% more of the total $f$-vector of the approximating circumscribed polytope.

\begin{corollary}\label{large-N-thm}
Let $d\geq 2$ and fix $j\in\{1,\ldots,d\}$.
   \begin{itemize} 
   \item[(i)] Let $k\in\{0,1,\ldots,\lfloor d/2\rfloor\}$ be fixed. Then
    \[
\liminf_{N\to\infty}\left\{N^{\frac{2}{d-1}}\delta_j(B_d,\mathcal{P}_{d,k}^{\rm in}(N)) \right\} \geq \frac{1}{2}\operatorname{div}_{d-1}\omega_{d}^{\frac{2}{d-1}}jV_j(B_d).
    \]

    \item[(ii)]  Let $k\in\{\lceil d/2\rceil-1,\ldots,d-1\}$ be fixed. Then
    \[
\liminf_{N\to\infty}\left\{N^{\frac{2}{d-1}}\delta_j(B_d,\mathcal{P}_{d,k}^{\rm out}(N)) \right\} \geq \frac{1}{2}\operatorname{del}_{d-1}\omega_{d}^{\frac{2}{d-1}}jV_j(B_d).
    \]
    \end{itemize}
\end{corollary}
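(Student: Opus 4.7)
The plan is to prove Corollary~\ref{large-N-thm} by a two-stage reduction: first use Lemma~\ref{inclusion-chain} to reduce each part to its extremal value of $k$, and then obtain the asymptotic bound in that extremal case by a density argument for inscribed polytopes or via the extended isoperimetric inequality for circumscribed polytopes.

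For part~(i), Lemma~\ref{inclusion-chain}(i) yields $\mathcal{P}_{d,k}^{\rm in}(N) \subset \mathcal{P}_{d,0}^{\rm in}(N)$, so
\[
\delta_j(B_d,\mathcal{P}_{d,k}^{\rm in}(N))\;\geq\;\delta_j(B_d,\mathcal{P}_{d,0}^{\rm in}(N)),
\]
and it is enough to establish the bound at $k=0$. My plan is to invoke \eqref{BHK-asymptotic} at $k=0$, which treats simplicial inscribed polytopes, and then to remove the simplicial restriction by a standard density argument. Given any $P\in\mathcal{P}_{d,0}^{\rm in}(N)$, I would first replace $P$ by $(1-\varepsilon)P$ to create slack away from $\partial B_d$, then generically perturb the vertices (by at most $\varepsilon/2$ in norm) to obtain a simplicial polytope $P'\in\mathcal{P}_{d,0}^{\rm in,s}(N)$ with the same vertex count. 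Continuity of $\delta_j$ in the Hausdorff metric makes $\delta_j(B_d,P')$ arbitrarily close to $\delta_j(B_d,P)$, so $\delta_j(B_d,\mathcal{P}_{d,0}^{\rm in}(N)) = \delta_j(B_d,\mathcal{P}_{d,0}^{\rm in,s}(N))$ and \eqref{BHK-asymptotic} concludes part~(i).

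For part~(ii), Lemma~\ref{inclusion-chain}(ii) analogously reduces to $k=d-1$. The preceding density argument does not transfer cleanly: a generic perturbation of the facet hyperplanes of a circumscribed polytope produces a \emph{simple} rather than simplicial polytope, so \eqref{BHK-asymptotic-2} cannot be invoked directly. My plan is instead to route through the extended isoperimetric inequality \eqref{extisoineq}. For any $P\supset B_d$, comparing $V_j(P)$ with the corresponding power of $V_d(P)$ and Taylor-expanding about $V_d(P)=V_d(B_d)$ gives
\[
\delta_j(B_d,P)\;\geq\;\frac{j\,V_j(B_d)}{d\,V_d(B_d)}\,\delta_d(B_d,P)\,(1-o(1))
\]
as $\delta_d(B_d,P)\to 0$. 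Substituting the classical Gruber-type asymptotic
\[
\liminf_{N\to\infty} N^{2/(d-1)}\,\delta_d(B_d,\mathcal{P}_{d,d-1}^{\rm out}(N))\;\geq\;\tfrac{1}{2}\operatorname{del}_{d-1}\,\vol_{d-1}(\partial B_d)^{(d+1)/(d-1)},
\]
which holds for \emph{all} circumscribed polytopes with $N$ facets (no simplicial hypothesis), and simplifying via $\vol_{d-1}(\partial B_d)=d\,V_d(B_d)$, recovers exactly the target constant $\tfrac{1}{2}\operatorname{del}_{d-1}\vol_{d-1}(\partial B_d)^{2/(d-1)} j\,V_j(B_d)$.

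The main obstacle will be part~(ii): the absence of a clean ``simplicialization'' procedure for circumscribed polytopes at fixed facet count prevents a direct appeal to \eqref{BHK-asymptotic-2}. The isoperimetric detour through the $\delta_d$-asymptotic neatly sidesteps this, since in the volume-approximation regime the classical theory already accommodates general (non-simplicial) circumscribed polytopes.
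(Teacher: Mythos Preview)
Your proposal is correct, and both parts share with the paper the reduction via Lemma~\ref{inclusion-chain} to the extremal cases $k=0$ and $k=d-1$. The divergence is only in how you handle those extremal cases. The paper simply cites \cite[Theorem~1]{BHK}, whose proof already establishes the asymptotic lower bounds for $\delta_j(B_d,\mathcal{P}_{d,0}^{\rm in}(N))$ and $\delta_j(B_d,\mathcal{P}_{d,d-1}^{\rm out}(N))$ \emph{without} any simplicial hypothesis; the inequalities \eqref{BHK-asymptotic} and \eqref{BHK-asymptotic-2} that you start from are the weaker simplicial corollaries. So your density/perturbation argument in part~(i), and your isoperimetric detour through $\delta_d$ in part~(ii), both correctly re-derive what \cite{BHK} already proves directly; in fact the isoperimetric route you outline in~(ii) is essentially the mechanism behind \cite[Theorem~1]{BHK} (compare the proof of Theorem~\ref{mainThm}(ii) in this paper). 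Your approach is more self-contained but redundant given what is already available in the literature.
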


\begin{proof}
(i) It was shown in the proof of \cite[Theorem 1 i)]{BHK} that in the case of inscribed polytopes with a restricted number of vertices, the following asymptotic inequality holds true: 
     \[
\liminf_{N\to\infty}\left\{N^{\frac{2}{d-1}}\delta_j(B_d,\mathcal{P}^{\rm in}_{d,0}(N)) \right\} \geq \frac{1}{2}\operatorname{div}_{d-1}\omega_{d}^{\frac{2}{d-1}}jV_j(B_d).
    \]
    By Lemma \ref{inclusion-chain}, for any $k\in\{0,1,\ldots,\lfloor d/2\rfloor\}$, we have $\mathcal{P}_{d,0}^{\rm in}(N) \supset \mathcal{P}_{d,k}^{\rm in}(N)$, which implies
    \[
 \delta_j(B_d,\mathcal{P}_{d,0}^{\rm in}(N)) \leq \delta_j(B_d,\mathcal{P}_{d,k}^{\rm in}(N)).
    \]
Therefore,
\begin{align*}
    \liminf_{N\to\infty}\left\{N^{\frac{2}{d-1}}\delta_j(B_d,\mathcal{P}_{d,k}^{\rm in}(N)) \right\}\geq \liminf_{N\to\infty}\left\{N^{\frac{2}{d-1}}\delta_j(B_d,\mathcal{P}_{d,0}^{\rm in}(N)) \right\} 
    \geq \frac{1}{2}\operatorname{div}_{d-1}\omega_{d}^{\frac{2}{d-1}}jV_j(B_d).
\end{align*}
Part (ii) is shown in the same way. 
\end{proof}

\subsection{The total intrinsic volume metric}

The \emph{Wills functional} (or \emph{total intrinsic volume}) of a convex body $K$ in $\R^d$  is defined by $W(K)=\sum_{j=0}^d V_j(K)$. For convex bodies $K$ and $L$ in $\R^d$, Besau, Hoehner and Kur \cite{BHK} defined the \emph{Wills deviation} (or \emph{total intrinsic volume deviation})  $\Delta_\Sigma(K,L)$ to be
\[
\Delta_\Sigma(K,L) := 
W(K)+W(L)-2W(K\cap L).
\]

The Wills deviation has found applications to polytope approximation of convex bodies; see \cite{BHK} for more background, as well as for a more general stochastic form of the Wills deviation.

This leads us to make the following
\begin{definition}
    For convex bodies $K$ and $L$ in $\R^d$, the  \emph{total intrinsic volume metric}  (or \emph{Wills metric}) $\delta_\Sigma(K,L)$ is defined by
\[
\delta_\Sigma(K,L) := \sum_{j=0}^d \delta_j(K,L).
\]
\end{definition}

Note that $\delta_\Sigma$ is a metric on the set of convex bodies in $\R^d$ because it is a finite sum of metrics, and it is continuous with respect to the Hausdorff metric. Moreover, observe that if $K\subset L$, then
\[
\delta_\Sigma(K,L) = \Delta_\Sigma(K,L) = W(L)-W(K)
\]
reduces to the \emph{total intrinsic volume difference} of $K$ and $L$. Furthermore, since $(K\cap L)|H\subset (K|H)\cap(L|H)$ for any $H\in\Gr(d,j)$, we have $\delta_\Sigma\leq \Delta_\Sigma$.

In what follows, we let $\overline{W}(B_d):=\sum_{j=1}^d \frac{j}{d}V_j(B_d)$ denote the \emph{average intrinsic volume} of $B_d$. Average intrinsic volumes have  recently found applications to the polytopal approximation of convex bodies \cite{BHK} and to the  concentration of ultra log-concave distributions in probability \cite{Aravinda-et-al, Lotz-et-al}.

For the total intrinsic volume metric, we obtain the following result. 

\begin{corollary}\label{wills-cor}
 Let $d\geq 2$.
 \begin{itemize}
     \item[(i)] Fix $k\in\{0,1,\ldots,\lfloor d/2\rfloor\}$ and $c\in(0,1)$. For all $M \geq \frac{\omega_{d}}{4\kappa_{d-1}}\left(\frac{d-1}{8(1-c)}\right)^{\frac{d-1}{2}}$ and all polytopes $P_M\in\mathcal{P}_{d,k}^{\rm in}(M)$ with $o\in\operatorname{int}(P_M)$, we have
     \[
\delta_\Sigma(B_d,P_M) \geq \frac{c}{4}\left(\frac{\omega_{d}}{4\kappa_{d-1}}\right)^{\frac{2}{d-1}}d\overline{W}(B_d)M^{-\frac{2}{d-1}}.
     \]

     \item[(ii)] Fix $k\in\{\lceil d/2\rceil-1,\ldots,d-1\}$ and $c\in(0,1)$. For all $M \geq \frac{\omega_{d}}{4\kappa_{d-1}}\left(\frac{d-1}{16(1-c)}\right)^{\frac{d-1}{2}}$ and all polytopes $P_M\in\mathcal{P}_{d,k}^{\rm out}(M)$, we have
     \[
\delta_\Sigma(B_d,P_M) \geq \frac{c}{8}\left(\frac{\omega_{d}}{4\kappa_{d-1}}\right)^{\frac{2}{d-1}}d\overline{W}(B_d)M^{-\frac{2}{d-1}}.
     \]
 \end{itemize}
\end{corollary}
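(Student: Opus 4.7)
The plan is to apply Theorem \ref{mainThm} termwise for each $j \in \{1,\ldots,d\}$, sum the resulting bounds, and recognize the arithmetic sum as $d\,\overline{W}(B_d)$.

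For part (i), since $P_M \subset B_d$, the discussion preceding the corollary shows that the Wills metric collapses to a difference of total intrinsic volumes:
\[
\delta_\Sigma(B_d, P_M) \;=\; W(B_d) - W(P_M) \;=\; \sum_{j=0}^{d} \delta_j(B_d, P_M).
\]
The $j=0$ summand vanishes because $V_0(B_d) = V_0(P_M) = 1$ for any convex body, so effectively the sum runs from $j = 1$ to $d$.

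Next, I would apply Theorem \ref{mainThm}(i) with the prescribed $c \in (0,1)$ to each $j \in \{1,\ldots,d\}$ individually. The hypothesis of that theorem demands
\[
M \;\geq\; \frac{\vol_{d-1}(\partial B_d)}{4\vol_{d-1}(B_{d-1})}\left(\frac{d-j}{8(1-c)}\right)^{\frac{d-1}{2}},
\]
which is strongest at $j = 1$. The threshold on $M$ stated in Corollary \ref{wills-cor}(i) is precisely this worst-case value, so it simultaneously validates all $d$ of the required inequalities. Summing them gives
\[
\delta_\Sigma(B_d, P_M) \;\geq\; \frac{c}{4}\left(\frac{\vol_{d-1}(\partial B_d)}{4\vol_{d-1}(B_{d-1})}\right)^{\frac{2}{d-1}} M^{-\frac{2}{d-1}} \sum_{j=1}^{d} j\, V_j(B_d).
\]

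Finally, the identity $\sum_{j=1}^{d} j\, V_j(B_d) = d\cdot \overline{W}(B_d)$ is immediate from the definition of the average intrinsic volume, yielding the claimed bound in part (i). Part (ii) is identical in structure: use $P_M \supset B_d$ to write $\delta_\Sigma(B_d, P_M) = W(P_M) - W(B_d) = \sum_{j=1}^{d}\delta_j(B_d, P_M)$, apply Theorem \ref{mainThm}(ii) termwise (whose $j=1$ threshold is $\frac{\vol_{d-1}(\partial B_d)}{4\vol_{d-1}(B_{d-1})}\bigl(\frac{d-1}{16(1-c)}\bigr)^{\frac{d-1}{2}}$, matching the hypothesis), and sum. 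There is no genuine obstacle; the only point deserving explicit mention is the uniform control of the $d$ conditions on $M$ via the worst case $j = 1$, which is exactly what the stated threshold encodes.
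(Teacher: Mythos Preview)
Your proof is correct and follows essentially the same approach as the paper: apply Theorem~\ref{mainThm} for each $j\in\{1,\ldots,d\}$, observe that the threshold on $M$ is most restrictive at $j=1$ (which matches the stated hypothesis), and sum. You supply more detail than the paper does (the vanishing of the $j=0$ term, the explicit identification of the worst-case $j$), but the argument is identical in substance.
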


\begin{proof}
We only prove (i) as the proof of (ii) follows along the same lines. Let $P_M\in\mathcal{P}_{d,k}^{\rm in}(M)$ with $o\in\operatorname{int}(P_M)$. For fixed $k$, note that all $d$  inequalities in  Theorem \ref{mainThm}(i) (for $j\in\{1,\ldots,d\}$) hold when $M \geq \frac{\vol_{d-1}(\partial B_d)}{4\vol_{d-1}(B_{d-1})}\left(\frac{d-1}{8(1-c)}\right)^{\frac{d-1}{2}}$. Thus, for any such $M$, 
 \[
\delta_\Sigma(B_d,P_M) =\sum_{j=0}^d \delta_j(B_d,P_M) \geq \frac{c}{4}\left(\frac{\omega_{d}}{4\kappa_{d-1}}\right)^{\frac{2}{d-1}}d\overline{W}(B_d)M^{-\frac{2}{d-1}}.
     \]
\end{proof}

We also obtain the following asymptotic lower bounds.

\begin{corollary}\label{large-N-thm-Wills}
   Let $d\geq 2$.

   \begin{itemize} 
   \item[(i)] Let $k\in\{0,1,\ldots,\lfloor d/2\rfloor\}$ be fixed. Then
    \[
\liminf_{N\to\infty}\left\{N^{\frac{2}{d-1}}\delta_\Sigma(B_d,\mathcal{P}_{d,k}^{\rm in}(N)) \right\} \geq \frac{1}{2}\operatorname{div}_{d-1}\omega_{d}^{\frac{2}{d-1}}d\overline{W}(B_d).
    \]

    \item[(ii)]  Let $k\in\{\lceil d/2\rceil-1,\ldots,d-1\}$ be fixed. Then
    \[
\liminf_{N\to\infty}\left\{N^{\frac{2}{d-1}}\delta_\Sigma(B_d,\mathcal{P}_{d,k}^{\rm out}(N)) \right\} \geq \frac{1}{2}\operatorname{del}_{d-1}\omega_{d}^{\frac{2}{d-1}}d\overline{W}(B_d).
    \]
    \end{itemize}
\end{corollary}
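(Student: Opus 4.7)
The plan is to obtain Corollary \ref{large-N-thm-Wills} as a direct consequence of Corollary \ref{large-N-thm} by summing the per-intrinsic-volume bounds over $j \in \{1,\ldots,d\}$. First, I would use that each $\delta_j$ is nonnegative (it is a metric), so that for any polytope $P$,
\[
\delta_\Sigma(B_d,P) \;=\; \sum_{j=0}^d \delta_j(B_d,P) \;\geq\; \sum_{j=1}^d \delta_j(B_d,P).
\]
Applying the elementary bound $\inf_P \sum_j f_j(P) \geq \sum_j \inf_P f_j(P)$ over the admissible class $\mathcal{P}_{d,k}^{\rm in}(N)$ then yields
\[
\delta_\Sigma(B_d,\mathcal{P}_{d,k}^{\rm in}(N)) \;\geq\; \sum_{j=1}^d \delta_j(B_d,\mathcal{P}_{d,k}^{\rm in}(N)).
\]

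Next, I would multiply by $N^{2/(d-1)}$ and take $\liminf$ as $N\to\infty$. Using that $\liminf$ is superadditive over finite sums of nonnegative sequences and applying Corollary \ref{large-N-thm}(i) term-by-term to each of the $d$ summands, the right-hand side is bounded below by
\[
\frac{1}{2}\operatorname{div}_{d-1}\vol_{d-1}(\partial B_d)^{\frac{2}{d-1}} \sum_{j=1}^d j\, V_j(B_d).
\]
The proof then concludes with the identity $\sum_{j=1}^d j V_j(B_d) = d\,\overline{W}(B_d)$, which is immediate from the definition $\overline{W}(B_d) := \sum_{j=1}^d (j/d) V_j(B_d)$. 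Part (ii) proceeds identically after replacing $\mathcal{P}_{d,k}^{\rm in}(N)$ by $\mathcal{P}_{d,k}^{\rm out}(N)$ and $\operatorname{div}_{d-1}$ by $\operatorname{del}_{d-1}$, invoking Corollary \ref{large-N-thm}(ii) in place of (i).

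There is no substantive obstacle here: the corollary is a clean structural assembly from already-proved per-intrinsic-volume asymptotic bounds. The only routine facts to verify are that taking an infimum of a sum dominates the sum of the individual infima, and that $\liminf$ is superadditive over finite sums of nonnegative sequences — both are standard. No new geometric or combinatorial input beyond Corollary \ref{large-N-thm} is required; the upgrade from $\delta_j$ to the Wills metric $\delta_\Sigma$ is purely a repackaging via the identity $\sum_{j=1}^d j V_j(B_d) = d\,\overline{W}(B_d)$.
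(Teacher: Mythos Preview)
Your proposal is correct and matches the paper's (implicit) approach: the paper does not write out a separate proof for this corollary, but the intended argument---evident from the proof of Corollary~\ref{wills-cor} and the structure of Corollary~\ref{large-N-thm}---is exactly to sum the per-$j$ asymptotic lower bounds of Corollary~\ref{large-N-thm} over $j\in\{1,\ldots,d\}$ and invoke the identity $\sum_{j=1}^d jV_j(B_d)=d\,\overline{W}(B_d)$. The auxiliary inequalities you use (infimum of a sum dominates the sum of infima, superadditivity of $\liminf$ on nonnegative sequences) are precisely the standard glue needed, and no additional geometric input is required.
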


\begin{remark}
It is known that in the cases of inscribed polytopes with a fixed number of vertices or circumscribed polytopes with a fixed number of facets, these lower bounds give the best possible approximation (up to absolute constants); see \cite[Theorem 5]{BHK}. Corollaries \ref{wills-cor} 
and \ref{large-N-thm-Wills} extend the corresponding lower bounds in \cite[Theorem 5]{BHK} by removing the assumption that the polytopes are simplicial, and in the circumscribed case, our lower bounds hold for an additional 25\% of the $f$-vector.
\end{remark}

\subsection{Arbitrarily positioned polytopes}

The results in this section will show that for volume and surface area approximation, dropping the restriction that the polytope contains the ball (or vice versa) improves the lower estimates by a factor of dimension.

\subsubsection{Symmetric volume difference}

Our next result extends \cite[Theorem 2]{LSW} from $k=d-1$ to all $k\in\{ \lceil\tfrac{d}{2}\rceil-1,\ldots,d-1\}$.

\begin{theorem}\label{vol-lower-bd}
Let $d\geq 2$ and  $k\in\{ \lceil\tfrac{d}{2}\rceil-1,\ldots,d-1\}$. For every polytope $P_M$ in $\R^d$ with at most $M$ $k$-faces and whose facets all meet the interior of $B_d$, we have
    \[
\vol_d(B_d\triangle P_M)>\frac{1}{4\pi e}\vol_{d-1}(\partial P_M\cap B_d)^{\frac{d+1}{d-1}}\left(1-O\left(\frac{\ln d}{d}\right)\right)M^{-\frac{2}{d-1}}.
    \]
    In particular, if $P_M$ also  satisfies $\vol_{d-1}(\partial P_M\cap B_d)\geq c\,\omega_{d}$ for some absolute constant $c>0$, then
    \[
        \vol_d(B_d\triangle P_M) \geq c_1\kappa_dM^{-\frac{2}{d-1}}
    \]
    for some absolute constant $c_1>0$.
\end{theorem}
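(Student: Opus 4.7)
The plan is to exploit the Hinman combinatorial inequality \eqref{facets-bd} to reduce the assertion to the facet case $k=d-1$, invoke the sharp facet-level estimate of Ludwig, Sch\"utt and Werner \cite{LSW}, and then specialize to the ``in particular'' statement via Stirling's formula.

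Since $k\in\{\lceil d/2\rceil-1,\ldots,d-1\}$, inequality \eqref{facets-bd} yields $f_{d-1}(P_M)\leq f_k(P_M)\leq M$, so $P_M$ has at most $M$ facets. The result of \cite{LSW}, in the sharp form consistent with the Mankiewicz--Sch\"utt asymptotic $\operatorname{del}_{d-1}=d/(2\pi e)+O(\ln d)$ recalled in Section \ref{mainresults-sec}, gives the bound
$$\vol_d(B_d\triangle P_M)>\frac{1}{4\pi e}\,\vol_{d-1}(\partial P_M\cap B_d)^{\frac{d+1}{d-1}}\Bigl(1-O\bigl(\tfrac{\ln d}{d}\bigr)\Bigr)M^{-\frac{2}{d-1}}$$
for arbitrarily positioned polytopes with at most $M$ facets meeting $\mathrm{int}(B_d)$. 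Since this bound depends on $P_M$ only through $\vol_{d-1}(\partial P_M\cap B_d)$ and the number of facets, it applies verbatim to our $P_M$.

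For the ``in particular'' clause, assume $\vol_{d-1}(\partial P_M\cap B_d)\geq c\vol_{d-1}(\partial B_d)=cd\vol_d(B_d)$. Using Stirling's formula $\vol_d(B_d)^{1/d}\sim\sqrt{2\pi e/d}$, a short computation shows $(d\vol_d(B_d))^{(d+1)/(d-1)}\sim 2\pi e\vol_d(B_d)$, so substituting into the previous display makes the prefactor a positive absolute-constant multiple of $\vol_d(B_d)$, as required.

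The main obstacle is pinning down the precise constant $1/(4\pi e)$ with the stated $(1-O(\ln d/d))$ correction. If the version of the bound in \cite{LSW} is not already this sharp, the facet-by-facet analysis has to be redone: for each facet $F$ meeting $\mathrm{int}(B_d)$, use a sharp cap-volume estimate to lower-bound the wedge volume between $F\cap B_d$ and the corresponding spherical cap by a constant of order $1/(2\pi e d)$ times $\vol_{d-1}(F\cap B_d)^{(d+1)/(d-1)}$; sum over the at most $M$ facets; and apply Jensen's inequality to the convex map $x\mapsto x^{(d+1)/(d-1)}$ to convert the sum of powers into a single power of $\vol_{d-1}(\partial P_M\cap B_d)$ divided by $M^{2/(d-1)}$. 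The sharp cap asymptotics---essentially those underlying Mankiewicz--Sch\"utt \cite{MaS1}---are the technical heart of the refinement.
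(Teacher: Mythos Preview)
Your proposal is correct and follows essentially the same route as the paper: reduce from $k$-faces to facets via Hinman's inequality \eqref{facets-bd} (the paper packages this as Lemma~\ref{inclusion-chain}), then invoke the facet-level bound from \cite{LSW} (restated as Lemma~\ref{LSW-lower-bd}), and finish the ``in particular'' clause by Stirling. Your anticipated ``obstacle'' paragraph also matches the paper's appendix proof of Lemma~\ref{LSW-lower-bd} almost exactly---a facet-by-facet cap-height estimate summed and then consolidated via H\"older's inequality (your Jensen is equivalent here).
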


To prove Theorem \ref{vol-lower-bd}, we use Lemma \ref{inclusion-chain} and the following lemma, whose proof follows directly from that of  \cite[Theorem 2]{LSW}.

\begin{lemma}\label{LSW-lower-bd}
    For every $d\geq 2$ and for all polytopes  $P_N$  with at most $N$ facets, all of which meet the interior of $B_d$, we have
    \[
\vol_d(B_d\triangle P_N)>\frac{1}{4\pi e}\vol_{d-1}(\partial P_N\cap B_d)^{\frac{d+1}{d-1}}\left(1-O\left(\frac{\ln d}{d}\right)\right)\NN.
    \]
    In particular, if $P_N$ also  satisfies $\vol_{d-1}(\partial P_N\cap B_d)\geq c\cdot\omega_{d}$ for some absolute constant $c>0$, then
    \[
        \vol_d(B_d\triangle P_N) \geq c_1\kappa_d\left(1-O\left(\frac{\ln d}{d}\right)\right)\NN.
    \]
\end{lemma}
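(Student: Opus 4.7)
The plan is to adapt the argument of \cite[Theorem 2]{LSW}, which treats essentially the same situation for an arbitrary smooth convex body. The strategy has three steps: (i) decompose $\vol_d(B_d\triangle P_N)$ facet-by-facet; (ii) prove a sharp per-facet ``cap-type'' lower bound; (iii) aggregate via a power-mean inequality. For step (i), enumerate the facets $F_1,\ldots,F_N$ of $P_N$, each with defining hyperplane $H_i$ meeting the interior of $B_d$, and set $a_i := \vol_{d-1}(F_i\cap B_d)$. A standard facetwise decomposition of $B_d\triangle P_N$ (the one used in \cite[Theorem 2]{LSW}) assigns to each facet $F_i$ a region $W_i\subset B_d\triangle P_N$ sandwiched between $F_i\cap B_d$ and the corresponding spherical piece of $\partial B_d$, with the $W_i$ pairwise disjoint in measure, so that $\vol_d(B_d\triangle P_N)\geq \sum_{i=1}^N\vol_d(W_i)$.

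For step (ii), the key local inequality is
\[
\vol_d(W_i)\;\geq\;\frac{1}{2d\,\bigl(\vol_{d-1}(B_{d-1})\bigr)^{2/(d-1)}}\,a_i^{(d+1)/(d-1)}.
\]
I would prove this by an isoperimetric comparison: among $(d-1)$-dimensional convex regions of fixed $(d-1)$-volume $a$ lying in a chord-hyperplane of $B_d$, the one minimizing the enclosed cap-volume is the round $(d-1)$-ball, for which the inequality reduces to a direct computation with the standard spherical-cap volume formula and the asymptotic expansion of the cap height as a function of the base radius.

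For step (iii), summing the per-facet bound over $i$ and applying Jensen's inequality (using convexity of $t\mapsto t^{(d+1)/(d-1)}$) to $\sum_i a_i = \vol_{d-1}(\partial P_N\cap B_d)$ yields
\[
\sum_{i=1}^N a_i^{(d+1)/(d-1)}\;\geq\;\vol_{d-1}(\partial P_N\cap B_d)^{(d+1)/(d-1)}\NN.
\]
Combining with the per-facet estimate and Stirling's formula in the form $\vol_{d-1}(B_{d-1})^{2/(d-1)}=\tfrac{2\pi e}{d}\bigl(1+O(\tfrac{\ln d}{d})\bigr)$ collapses the prefactor $\tfrac{1}{2d\,\vol_{d-1}(B_{d-1})^{2/(d-1)}}$ to $\tfrac{1}{4\pi e}\bigl(1-O(\tfrac{\ln d}{d})\bigr)$, which gives the first assertion. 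The ``in particular'' clause then follows by substituting the assumption $\vol_{d-1}(\partial P_N\cap B_d)\geq c\,\vol_{d-1}(\partial B_d)=cd\,\vol_d(B_d)$ and absorbing the $d$-dependent factors, by one more use of Stirling applied to $(d\vol_d(B_d))^{(d+1)/(d-1)}/\vol_d(B_d)$, into a single absolute constant $c_1$.

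The main obstacle is step (ii): obtaining the sharp leading constant $\tfrac{1}{4\pi e}$. A crude cap estimate yields a strictly worse prefactor, so the optimal constant requires both the full isoperimetric comparison sketched above and a careful separate treatment of two regimes --- near-tangent facets (where the cap-volume formula is asymptotically exact) versus facets whose hyperplane cuts deeply into $B_d$ (where one must argue that such facets contribute only a small share of $\vol_{d-1}(\partial P_N\cap B_d)$, so that Jensen's step is not degraded). Tracking the $O(\ln d/d)$ error uniformly through these regimes and through Stirling is the most delicate bookkeeping; everything else is either a routine inequality or a direct computation.
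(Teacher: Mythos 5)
Your overall route matches the paper's --- facet-by-facet decomposition, a per-facet cap bound, power-mean aggregation (Jensen, which is equivalent to the Hölder step the paper uses), and Stirling --- but the proposed proof of the per-facet bound in step (ii) has a genuine gap. The isoperimetric lemma you invoke is false as stated: once the chord hyperplane $H$ (and hence cap height $h$) is fixed, the vertical cap-depth $\sqrt{1-|u|^2}-(1-h)$ over a point $u$ of $H\cap B_d$ is a \emph{decreasing} function of $|u|$ that vanishes on the rim of the chord disk, so among convex $R\subset H\cap B_d$ of fixed $(d-1)$-volume $a$ the cap-volume over $R$ is minimized not by a centered round disk but by a thin set hugging the rim. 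For $d\geq 3$ that minimizer has cap-volume $o\bigl(a^{(d+1)/(d-1)}\bigr)$ as the set collapses onto the rim, so one cannot extract the per-facet estimate by comparing the cap above $F_i\cap B_d$ directly to the cap above a round disk of the same area. The ``two-regime'' bookkeeping you anticipate is also unnecessary once the right comparison is made.

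The paper's argument avoids this entirely. It lower-bounds $\vol_d(B_d\triangle P_N)$ by $\tfrac{1}{d}\sum_F h_F\,\vol_{d-1}(F\cap B_d)$, i.e.\ by the sum of the volumes of the \emph{cones} with base $F\cap B_d$ and apex $\xi_F\in\partial B_d$; these cones lie in $B_d\setminus P_N$ and are pairwise disjoint by a short convexity computation. It then bounds $h_F$ from below by the height $\widetilde h_F$ of the standard cap whose round base has area $\vol_{d-1}(F\cap B_d)$ --- this requires only that $F\cap B_d\subseteq H_F\cap B_d$ together with the monotonicity of chord-disk area in cap height, not an isoperimetric inequality --- and finishes with the elementary bound $\widetilde h_F=1-\sqrt{1-r_F^2}\geq\tfrac12 r_F^2$. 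These three observations hold uniformly over all facets, so no case split between near-tangent and deep-cutting facets is needed, and the constant $\tfrac{1}{4\pi e}\bigl(1-O(\tfrac{\ln d}{d})\bigr)$ then falls out immediately from Stirling applied to $\vol_{d-1}(B_{d-1})^{2/(d-1)}$. Your steps (i) and (iii), and the ``in particular'' clause, are fine and coincide with the paper's.
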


 For completeness and the reader's convenience, we include the full details of the proof of Lemma \ref{LSW-lower-bd} in the appendix.

\begin{remark}
K. J. B\"or\"oczky \cite[Theorem A(i)]{Boroczky-2000} proved that if $k\in\{0,\ldots,d-1\}$, $K$ is a convex body in $\R^d$ with $C^2$ boundary, and $P_M$ is a polytope with at most $M$ $k$-faces,  then
\[
\vol_d(K\triangle P_M) >\frac{1}{67\pi e^2}\frac{{\rm as}(K)^{\frac{d+1}{d-1}}}{d}M^{-\frac{2}{d-1}}.
\]
Here, $\operatorname{as}(K)= \int_{\partial K} \kappa_K(x)^{\frac{1}{d+1}}\, d\mathcal{H}^{d-1}(x)$ is the  affine surface area of $K$, where $\kappa_K(x)$ denotes the
(generalized) Gauss curvature at $x\in \partial K$, the boundary of $K$, and
$\mathcal{H}^{d-1}$ is the surface area measure on $\partial K$. 
The definition of the generalized Gauss curvature is due to Alexandroff \cite{Alexandroff} and Busemann--Feller \cite{Buse-Feller}.
In the case $K=B_d$, by Stirling's inequality this yields
\begin{equation}\label{boroczky-lower}
\vol_d(B_d\triangle P_M) \geq \frac{c}{d}\kappa_dM^{-\frac{2}{d-1}}.
\end{equation}
Therefore, in the special case of the Euclidean ball and for $k\in\{\lceil\frac{d}{2}\rceil-1,\ldots,d-1\}$, if $\vol_{d-1}(\partial P_M\cap B_d)\geq c\cdot\omega_{d}$, then Theorem \ref{vol-lower-bd} improves B\"or\"oczky's estimate \eqref{boroczky-lower} by a factor of dimension.  Furthermore, B\"or\"oczky's estimate requires $M$ to be sufficiently large, whereas our lower bound holds for any $M$.
\end{remark}

\subsubsection{Surface area deviation}

For convex bodies $K$ and $L$ in $\R^d$, the \emph{surface area deviation} $\Delta_s(K,L)$ is defined by
\[
\Delta_s(K,L):=\vol_{d-1}(\partial K)+\vol_{d-1}(\partial L)-2\vol_{d-1}(\partial(K\cap L)).
\]
It is not a metric on the set of convex bodies in $\R^d$. When $K\subset L$, it reduces to the \emph{surface area difference}
\[
\Delta_s(K,L) = \vol_{d-1}(\partial L)-\vol_{d-1}(\partial K).
\]

Next, we use \eqref{facets-bd} to extend \cite[Theorem 2]{HSW} from $k=d-1$ to all $\lceil\tfrac{d}{2}\rceil-1\leq k\leq d-1$. The extended result reads as follows.

\begin{theorem}\label{mainThm-general}
    Let $d\geq 2$ and $k\in\{\lceil\tfrac{d}{2}\rceil-1,\ldots, d-1\}$. Let $P_M$ be a polytope with at most $M$ $k$-faces and assume that $o\in\operatorname{int}(P_M)\subset 2B_d$.
 There is an absolute constant $c>0$  such that 
    \[
        \Delta_s(B_d,P_M) \geq c\frac{\vol_{d-1}(\partial P_M)^{\frac{d+1}{d-1}}}{\kappa_{d-1}^{\frac{2}{d-1}}}M^{-\frac{2}{d-1}}.
    \]
\end{theorem}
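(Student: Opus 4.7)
The plan is to reduce the $k$-face statement to the facet case ($k = d-1$) by invoking Hinman's combinatorial inequality \eqref{facets-bd}. The facet version is \cite[Theorem 2]{HSW}, which under the same geometric hypothesis $o \in \operatorname{int}(P) \subset 2 B_d$ provides an absolute constant $c > 0$ such that every polytope $P$ with at most $N$ facets satisfies
\[
\Delta_s(B_d, P) \geq c\, \frac{\vol_{d-1}(\partial P)^{\frac{d+1}{d-1}}}{\vol_{d-1}(B_{d-1})^{\frac{2}{d-1}}}\, N^{-\frac{2}{d-1}}.
\]
So the task is simply to verify that our polytope $P_M$, whose number of $k$-faces is controlled, is automatically a polytope whose number of facets is controlled by the same quantity.

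First, I would apply \eqref{facets-bd}: for any $k \in \{\lceil d/2\rceil - 1, \ldots, d-1\}$ and any polytope $P_M$ with at most $M$ $k$-faces, one has
\[
f_{d-1}(P_M) \leq f_k(P_M) \leq M.
\]
That is, restricting the number of $k$-faces in the upper half of the $f$-vector automatically restricts the number of facets by the same bound. Next, since the geometric hypothesis $o \in \operatorname{int}(P_M) \subset 2 B_d$ is unaffected, $P_M$ satisfies the hypotheses of \cite[Theorem 2]{HSW} with $N = f_{d-1}(P_M) \leq M$. Applying that theorem and using the monotonicity of $t \mapsto t^{-2/(d-1)}$ then yields
\[
\Delta_s(B_d, P_M) \geq c\, \frac{\vol_{d-1}(\partial P_M)^{\frac{d+1}{d-1}}}{\vol_{d-1}(B_{d-1})^{\frac{2}{d-1}}}\, M^{-\frac{2}{d-1}},
\]
which is the desired inequality.

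This mirrors exactly the reduction used to deduce Theorem \ref{vol-lower-bd} from Lemma \ref{LSW-lower-bd}. There is no serious obstacle: the geometric content is entirely packaged into \cite[Theorem 2]{HSW}, and the combinatorial content is entirely packaged into Hinman's inequality \eqref{facets-bd}. The only observation specific to this theorem is that the two ingredients combine cleanly, since the geometric hypotheses are preserved verbatim under the reduction and the $f$-vector inequality runs in the correct direction for $k$ in the upper-half range $\{\lceil d/2\rceil - 1, \ldots, d-1\}$.
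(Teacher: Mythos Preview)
Your proposal is correct and takes essentially the same approach as the paper: both reduce to the facet case via Hinman's inequality \eqref{facets-bd} and then invoke the surface area result from \cite{HSW}. The only cosmetic difference is that the paper, rather than citing \cite[Theorem~2]{HSW} as a black box, restates its two building blocks (Propositions~10 and~12 of \cite{HSW}) as Lemma~\ref{HSW-props}, extends each separately to $k$-faces via \eqref{facets-bd} (Corollary~\ref{cor1}), and then redoes the final H\"older combination; this is logically equivalent to your direct citation.
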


Hence, we obtain the following.
\begin{corollary}\label{mainThm-2}
    Let $d\geq 2$ and $k\in\{\lceil\tfrac{d}{2}\rceil-1,\ldots, d-1\}$. There is an absolute constant $c>0$ and $M_{d,k}\in\mathbb{N}$ such that for all $M\in\mathbb{N}$ with $M\geq M_{d,k}$ and all polytopes $P_M$ with at most $M$ $k$-dimensional faces,
    \[
        \Delta_s(B_d,P_M) \geq c\,\omega_{d}M^{-\frac{2}{d-1}}.
    \]
\end{corollary}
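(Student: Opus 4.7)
The plan is to deduce the corollary from Theorem \ref{mainThm-general} by trading the polytope-dependent quantity $\vol_{d-1}(\partial P_M)^{(d+1)/(d-1)}/\vol_{d-1}(B_{d-1})^{2/(d-1)}$ for an absolute constant multiple of $\vol_{d-1}(\partial B_d)$, and I carry this out through a simple dichotomy on the size of $\vol_{d-1}(\partial P_M)$ relative to $\vol_{d-1}(\partial B_d)$.

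First, suppose $\vol_{d-1}(\partial P_M)\geq \tfrac{1}{2}\vol_{d-1}(\partial B_d)$. Substituting into Theorem \ref{mainThm-general} and rewriting
\[
\frac{\vol_{d-1}(\partial B_d)^{(d+1)/(d-1)}}{\vol_{d-1}(B_{d-1})^{2/(d-1)}}
=\vol_{d-1}(\partial B_d)\left(\frac{\vol_{d-1}(\partial B_d)}{\vol_{d-1}(B_{d-1})}\right)^{2/(d-1)},
\]
the bracketed factor is at least $1$, because the Stirling estimate recorded earlier in the paper gives $\vol_{d-1}(\partial B_d)/\vol_{d-1}(B_{d-1})\geq\sqrt{2\pi d}(1-1/d)\geq 1$ for every $d\geq 2$. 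The remaining constant $(\tfrac{1}{2})^{(d+1)/(d-1)}$ is bounded below uniformly in $d$ by $2^{-3}=1/8$ (since $(d+1)/(d-1)\leq 3$ for $d\geq 2$), yielding the desired bound $\Delta_s(B_d,P_M)\geq c'\,\vol_{d-1}(\partial B_d)M^{-2/(d-1)}$ in this case, with an absolute constant $c'$.

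In the opposite case $\vol_{d-1}(\partial P_M)<\tfrac{1}{2}\vol_{d-1}(\partial B_d)$, I invoke the monotonicity of the surface area on convex bodies (an immediate consequence of Cauchy's hyperplane-projection formula, since $K\subset L$ implies $K|H\subset L|H$ for every $H\in\Gr(d,d-1)$). Applied to the two inclusions $B_d\cap P_M\subset B_d$ and $B_d\cap P_M\subset P_M$, this yields
\[
\vol_{d-1}(\partial(B_d\cap P_M))\leq\min\{\vol_{d-1}(\partial B_d),\,\vol_{d-1}(\partial P_M)\},
\]
so that $\Delta_s(B_d,P_M)\geq\vol_{d-1}(\partial B_d)-\vol_{d-1}(\partial P_M)>\tfrac{1}{2}\vol_{d-1}(\partial B_d)$. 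Choosing $M_{d,k}$ so that $\tfrac{1}{2}\geq c'\,M^{-2/(d-1)}$ for every $M\geq M_{d,k}$ (any threshold of order $(2c')^{(d-1)/2}$ works), the asserted inequality holds trivially in this regime with the same constant $c'$.

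The main non-computational step is the surface-area monotonicity deployed in the second case, but this is a standard consequence of Cauchy's formula; everything else is routine Stirling bookkeeping to keep the ratio $\vol_{d-1}(\partial B_d)/\vol_{d-1}(B_{d-1})$ bounded away from zero uniformly in $d$. The reason the dichotomy works without loss is conceptually simple: a polytope $P_M$ with suspiciously small surface area is automatically far from $B_d$ in the surface-area deviation $\Delta_s$, so the ``small surface area'' case contributes an $O(1)$ deficit that dominates the target scale $M^{-2/(d-1)}$ once $M$ exceeds the dimension-dependent threshold $M_{d,k}$.
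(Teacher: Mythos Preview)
Your dichotomy is a natural idea and Case~2 is fine, but Case~1 has a genuine gap: you apply Theorem~\ref{mainThm-general} directly to the given polytope $P_M$, yet that theorem carries the positional hypotheses $o\in\operatorname{int}(P_M)$ and $P_M\subset 2B_d$. The condition $\vol_{d-1}(\partial P_M)\geq\tfrac12\vol_{d-1}(\partial B_d)$ does not force either of these---surface area is translation-invariant, so $P_M$ could sit far from the origin or extend well outside $2B_d$ while still having large surface area. Without those hypotheses you cannot invoke Theorem~\ref{mainThm-general}.

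The paper sidesteps this by never applying Theorem~\ref{mainThm-general} to an arbitrary $P_M$. Instead it passes to a best-approximating polytope $P_M^*$ (with at most $M$ $k$-faces) and argues that for $M$ large enough one may assume $o\in\operatorname{int}(P_M^*)$, $P_M^*\subset 2B_d$, and $\vol_{d-1}(\partial P_M^*)\geq c_1\vol_{d-1}(\partial B_d)$; Theorem~\ref{mainThm-general} then applies to $P_M^*$, and the bound for every $P_M$ follows from $\Delta_s(B_d,P_M)\geq\Delta_s(B_d,P_M^*)$. Your approach could in principle be repaired by adding further cases (e.g.\ showing that if $o\notin\operatorname{int}(P_M)$ or $P_M\not\subset 2B_d$ then $\Delta_s(B_d,P_M)$ is already bounded below by a dimensional constant), but as written the argument is incomplete.
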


\begin{proof}
    When $M$ is sufficiently large (say, for all $M\geq M_{d,k}$), for a best-approximating polytope $P_M^*$ we can assume that $o\in\operatorname{int}(P_M^*)$, $\vol_{d-1}(\partial P_M^*)\geq c_1\omega_{d}$ and $P_M^*\subset 2B_d$. Hence, for any polytope $P_M$ in $\R^d$ with at most $M$ $k$-faces, by Theorem \ref{mainThm-general} and Stirling's formula, we get that for all $M\geq M_{d,k}$,
    \begin{align*}
\Delta_s(B_d,P_M) \geq \Delta_s(B_d,P_M^*) \geq c\, c_1^{\frac{d+1}{d-1}}\omega_{d}\left(\frac{\omega_{d}}{\kappa_{d-1}}\right)^{\frac{2}{d-1}}M^{-\frac{2}{d-1}}
\geq c_2\,\omega_{d}M^{-\frac{2}{d-1}}.
    \end{align*}
\end{proof}

Theorem \ref{mainThm-general} will be derived from small modifications to the end of the proof of \cite[Theorem 2]{HSW}. The proof of Theorem \ref{mainThm-general} is also in the appendix.

\section{Proofs of Theorems \ref{inscribed-mw-cor-kfaces} and \ref{circumscribed-vol-cor-kfaces}}\label{mw-vol-sec}

    In what follows, let $R:\R^d\to\Sp$, $x\mapsto x/\|x\|_2$, denote the radial projection. 

\subsection{Proof of Theorem \ref{inscribed-mw-cor-kfaces}}

Let $d\geq 2$. We aim to prove that for all polytopes $P_N\subset B_d$ with at most $N$ vertices that contain $o$ in their interiors,  we have
    \begin{equation}\label{mw-ineq-to-be-shown}
    \Delta_w(B_d,P_N) \geq \frac{1}{4}\left(\frac{\omega_{d}}{4\kappa_{d-1}}\right)^{\frac{2}{d-1}}\NN.
    \end{equation}

We recall that for a convex body $K \subset\mathbb{R}^d$ such that $o$ is in the interior of $K$, the polar body $K^\circ$ of $K$ is 
\[
K^\circ = \{ y \in \mathbb{R}^d:\,\forall x\in K,  \langle y, x \rangle \leq 1\}. 
\]
    To begin, note that since $P_N\subset B_d$ has at most  $N$ vertices and $o$ is in the interior of $P_N$, the polar polyhedron $P_N^\circ\supset B_d$ has at most $N$ facets, and  $P_N^\circ$ is bounded (and thus is a polytope).  Let 
    \[
    \eta_{d,N}:=\frac{1}{3}\left(\frac{\omega_{d}}{4\kappa_{d-1}N}\right)^{\frac{2}{d-1}},\]
    and define the following subset of the sphere:
    \[
\mathcal{U}_{P_N}:=\left\{u\in\Sp:\, h_{P_N}(u)\leq\frac{1}{1+\eta_{d,N}}\right\}.
    \]

Since $h_{P_N}=\rho_{P_N^\circ}^{-1}$, we have
    \begin{align}\label{UPN-lower-bd}
    \mathcal{U}_{P_N} &= \left\{u\in\Sp:\,\rho_{P_N^\circ}(u)\geq 1+\eta_{d,N}\right\} \nonumber\\
    &=\bigcup_{F\in\mathcal{F}_{d-1}(P_N^\circ)}R\left(F\cap((1+\eta_{d,N})B_d)^c\right)\nonumber\\
    &=\Sp\setminus \bigcup_{F\in\mathcal{F}_{d-1}(P_N^\circ)}R\left(F\cap((1+\eta_{d,N})B_d)\right).
    \end{align}
     By the Cauchy--Schwarz inequality and the fact that $P_N^\circ\supset B_d$, for every facet $F\in\mathcal{F}_{d-1}(P_N^\circ)$ with outer unit normal $\xi_F$, we have
    \begin{align*}
\vol_{d-1}\left(R\left(F\cap((1+\eta_{d,N})B_d)\right)\right) &=\int_{F\cap((1+\eta_{d,N})B_d)}\frac{\langle\xi_F,x\rangle}{\|x\|_2^d}\,dx\\
&\leq \int_{F\cap((1+\eta_{d,N})B_d)}\frac{dx}{\|x\|_2^{d-1}}\\
&\leq \vol_{d-1}\left(F\cap((1+\eta_{d,N})B_d)\right). 
    \end{align*}

Observe that if $F$ is any facet of $P_N^\circ$ with outer unit normal $\xi_F$, then $F\cap((1+\eta_{d,N})B_d)$ is contained in the $(d-1)$-dimensional ball $B_{d-1}(\dist(o,F)\xi_F,\sqrt{2\eta_{d,N}+\eta_{d,N}^2})$ with center $\dist(o,F)\xi_F$ and radius $\sqrt{2\eta_{d,N}+\eta_{d,N}^2}$. Hence,
\begin{align*}
   \vol_{d-1}\left(\bigcup_{F\in\mathcal{F}_{d-1}(P_N^\circ)}R\left(F\cap((1+\eta_{d,N})B_d)\right)\right) &\leq \sum_{F\in\mathcal{F}_{d-1}(P_N^\circ)}\vol_{d-1}\left(R(F\cap((1+\eta_{d,N})B_d))\right)\\
    &\leq \sum_{F\in\mathcal{F}_{d-1}(P_N^\circ)}\vol_{d-1}(F\cap ((1+\eta_{d,N}) B_d))\\
    &\leq \sum_{F\in\mathcal{F}_{d-1}(P_N^\circ)}\vol_{d-1}\left(B_{d-1}(\dist(o,F)\xi_F,\sqrt{2\eta_{d,N}+\eta_{d,N}^2})\right)\\
    &\leq \sum_{F\in\mathcal{F}_{d-1}(P_N^\circ)}\vol_{d-1}\left(B_{d-1}(\dist(o,F)\xi_F,\sqrt{3\eta_{d,N}})\right)\\
    &=N(3\eta_{d,N})^{\frac{d-1}{2}}\kappa_{d-1}
    =\frac{1}{4}\omega_{d}.
\end{align*}
In the last inequality, we used $\eta_{d,N}\leq 1$, which is proved below. Hence, by \eqref{UPN-lower-bd} we get
    \begin{equation}\label{lower-UPN}
\vol_{d-1}(\mathcal{U}_{P_N}) \geq \omega_{d}-\frac{1}{4}\omega_{d}= \frac{3}{4}\omega_{d}.
    \end{equation}

To conclude the proof, we use the previous estimates, the definition of mean width (\ref{width})  and the definition of $\mathcal{U}_{P_N}$ to derive
\begin{align*}
    \Delta_w(B_d,P_N) &=w(B_d)-w(P_N)
    =\frac{2}{\omega_{d}}\int_{\Sp}\left(1-h_{P_N}(u)\right)du\\
    &\geq \frac{2}{\omega_{d}}\int_{\mathcal{U}_{P_N}}\left(1-h_{P_N}(u)\right)du
    \geq\frac{2}{\omega_{d}}\int_{\mathcal{U}_{P_N}}\left(1-\frac{1}{1+\eta_{d,N}}\right)du\\
    &=\frac{2\eta_{d,N}}{1+\eta_{d,N}}\cdot\frac{\vol_{d-1}(\mathcal{U}_{P_N})}{\omega_{d}}
    \geq \frac{3}{2}\cdot\frac{\eta_{d,N}}{1+\eta_{d,N}}
    \geq \frac{3}{4}\eta_{d,N}.
\end{align*}
In the last line, we used the inequality $\eta_{d,N}\leq 1$  again. To see why $\eta_{d,N}\leq 1$ holds, note that for every $d\geq 1$ we have
\[
N\geq d+1> \sqrt{2\pi d}\left(\frac{1}{3}\right)^{\frac{d-1}{2}}\geq \left(\frac{1}{3}\right)^{\frac{d-1}{2}}\frac{\omega_{d}}{\kappa_{d-1}}
>\left(\frac{1}{3}\right)^{\frac{d-1}{2}}\frac{\omega_{d}}{4\kappa_{d-1}},
\]
where by Stirling's inequality, $\sqrt{2\pi d}\geq \omega_{d}/\kappa_{d-1}$. Rearranging terms in the preceding inequality and raising both sides to the power $2/(d-1)$, we obtain $\eta_{d,N}\leq 1$.

Finally, to extend the result to all $k$-faces with $k\in\{0,1,\ldots,\lfloor d/2\rfloor\}$, we combine the inequalities $\Delta_w(B_d,P_N)\geq \frac{3}{4}\eta_{d,N}$ and \eqref{vertices-bd}. \qed 
\subsection{Proof of Theorem \ref{circumscribed-vol-cor-kfaces}}

 Glasauer and Gruber \cite{glasgrub} proved a formula relating the mean width difference of a convex body and a polytope with a weighted symmetric difference metric of the polar body and the polar polytope. In the special case of the Euclidean unit ball and an inscribed polytope which contains the origin in its interior, their formula states that
    \[
w(B_d)-w(P)=\frac{2}{\omega_{d}}\int_{P^\circ\setminus B_d}\|x\|_2^{-(d+1)}\,dx.
    \]
    Since $\|x\|_2\geq 1$ for all $x\in P^\circ\setminus B_d$, we have
    \begin{equation}\label{polarity-formula-1}
w(B_d)-w(P)\leq \frac{2}{\omega_{d}}\cdot\vol_d(P^\circ\setminus B_d).
    \end{equation}

    Let $P_M\supset B_d$ be a polytope with at most $M$ $k$-faces. Since $P_M$ is a polytope, it is bounded, so $P_M^\circ\subset B_d$ contains the origin in its interior. Moreover,  $f_k(P_M)=f_{d-1-k}(P_M^\circ)$, so $P_M^\circ$ has at most $M$ $(d-1-k)$-dimensional faces. Since $k\in\{\lceil d/2\rceil-1,\ldots,d-1\}$, we have $d-1-k\in\{0,1,\ldots,\lfloor d/2\rfloor\}$. Thus, by \eqref{polarity-formula-1} and Theorem \ref{inscribed-mw-cor-kfaces}, we obtain
    \begin{align*}
        \vol_d(P_M\setminus B_d)\geq\frac{\omega_{d}}{2}\left(w(B_d)-w(P_M^\circ)\right) 
        \geq \frac{\omega_{d}}{8}\left(\frac{\omega_{d}}{4\kappa_{d-1}}\right)^{\frac{2}{d-1}}M^{-\frac{2}{d-1}}.
    \end{align*}
\qed 


\section{Asymptotic inequalities}\label{asymptotic-section}

Let $K$ be a convex body in $\R^d$ with $C_+^2$ curvature. Glasauer and Gruber \cite{glasgrub} proved that
\begin{equation}\label{mw-asymptotic-glas-grub}
    \begin{split}
\lim_{N\to\infty} N^{\frac{2}{d-1}}\inf\{\Delta_w(K,P_N)&:\,P_N \subset K\text{ has at most }N\text{ vertices}\}\\
&=\frac{\operatorname{div}_{d-1}}{\omega_{d}}\left(\int_{\partial K}\kappa(x)^{\frac{d}{d+1}}\,d\mathcal{H}(x)\right)^{\frac{d+1}{d-1}}.
\end{split}
\end{equation}
Choosing $K=B_d$, this yields
\begin{equation}\label{mw-asymptotic-glas-grub-ball}
    \begin{split}
\lim_{N\to\infty} N^{\frac{2}{d-1}}\inf\{\Delta_w(B_d,P_N):\,P_N \subset B_d\text{ has at most }N\text{ vertices}\}
= \operatorname{div}_{d-1}\cdot\omega_{d}^{\frac{2}{d-1}}.
\end{split}
\end{equation}

\begin{theorem}\label{mw-asymptotic}
    \begin{align*}
\lim_{N\to\infty} N^{\frac{2}{d-1}}\inf\{\Delta_w(B_d,P_N):\,P_N \subset B_d\text{ has at most }N\text{ vertices}\} 
\geq \frac{d-1}{d+1}\left(\frac{2\omega_{d}}{(d+1)\kappa_{d-1}}\right)^{\frac{2}{d-1}}.
    \end{align*}
\end{theorem}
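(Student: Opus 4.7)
The strategy is to revisit the argument of Theorem \ref{inscribed-mw-cor-kfaces} with the constants optimized for the asymptotic regime. That argument was designed to be valid for \emph{every} $N$, and so uses two simplifications which become unnecessary as $N\to\infty$: the estimate $2\eta+\eta^2\leq 3\eta$ (valid only when $\eta\leq 1$), and the specific denominator $4\vol_{d-1}(B_{d-1})$ in the definition of $\eta_{d,N}$. Asymptotically, both simplifications can be relaxed: $\eta$ will tend to $0$, and the denominator may be replaced by a free parameter to be optimized.

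Fix $\alpha>1$ and, for each sufficiently large $N$, let $\eta_N=\eta_N(\alpha,d)$ be the unique positive solution of
\[
N\,\vol_{d-1}(B_{d-1})\,(2\eta_N+\eta_N^2)^{(d-1)/2}=\frac{1}{\alpha}\vol_{d-1}(\partial B_d).
\]
Given any polytope $P_N\subset B_d$ with at most $N$ vertices and $o$ in its interior, the polar $P_N^\circ\supset B_d$ is a polytope with at most $N$ facets, each at distance $\geq 1$ from $o$. Repeating the proof of Theorem \ref{inscribed-mw-cor-kfaces} verbatim---placing each facet cap $F\cap[(1+\eta_N)B_d]$ inside the flat $(d-1)$-ball $B_{d-1}(\dist(o,F)\xi_F,\sqrt{2\eta_N+\eta_N^2})$, applying the Cauchy--Schwarz bound on the radial projection, and summing over the at most $N$ facets---one gets
\[
\vol_{d-1}(\mathcal{U}_{P_N})\geq\left(1-\frac{1}{\alpha}\right)\vol_{d-1}(\partial B_d),
\]
where $\mathcal{U}_{P_N}:=\{u\in\Sp:h_{P_N}(u)\leq (1+\eta_N)^{-1}\}$, and then via the mean-width integral formula
\[
\Delta_w(B_d,P_N)\geq \frac{2\eta_N}{1+\eta_N}\left(1-\frac{1}{\alpha}\right).
\]

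Since the right-hand side depends only on $N,d,\alpha$, it bounds the infimum over $P_N$. As $N\to\infty$ we have $\eta_N\to 0$, hence $(2\eta_N+\eta_N^2)^{(d-1)/2}\sim(2\eta_N)^{(d-1)/2}$ and $(1+\eta_N)^{-1}\to 1$. This yields
\[
\lim_{N\to\infty} N^{2/(d-1)}\cdot 2\eta_N=\left(\frac{\vol_{d-1}(\partial B_d)}{\alpha\vol_{d-1}(B_{d-1})}\right)^{2/(d-1)},
\]
and consequently
\[
\liminf_{N\to\infty} N^{2/(d-1)}\inf_{P_N}\Delta_w(B_d,P_N)\geq \left(1-\frac{1}{\alpha}\right)\left(\frac{\vol_{d-1}(\partial B_d)}{\alpha\vol_{d-1}(B_{d-1})}\right)^{2/(d-1)}.
\]
A direct one-variable calculus optimization of the right-hand side in $\alpha$ (setting the derivative of $\ln[(1-1/\alpha)\alpha^{-2/(d-1)}]$ to zero) gives the unique maximizer $\alpha=(d+1)/2$, for which $1-1/\alpha=(d-1)/(d+1)$; substituting reproduces the stated inequality. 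I foresee no real obstacle: the only subtlety is that the $o(1)$ terms in the passage to the limit are uniform in $P_N$, which is automatic since $\eta_N$ depends only on $N$, $d$, and $\alpha$.
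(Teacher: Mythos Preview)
Your proof is correct and follows essentially the same approach as the paper: revisit the argument of Theorem~\ref{inscribed-mw-cor-kfaces} with a free parameter, pass to the limit $N\to\infty$ so the lower-order terms vanish, and then optimize. Your parameter $\alpha$ corresponds to the paper's $1/c$ (with the same optimizer $\alpha=(d+1)/2$, i.e.\ $c=2/(d+1)$); your implicit definition of $\eta_N$ via $N\vol_{d-1}(B_{d-1})(2\eta_N+\eta_N^2)^{(d-1)/2}=\alpha^{-1}\vol_{d-1}(\partial B_d)$ is a slightly cleaner device than the paper's explicit $\eta_{d,N,c}$ followed by an auxiliary $\varepsilon\to 0$ step, but the content is identical.
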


\begin{proof}
    Let $c\in(0,1)$ be fixed. Let $P_N^*$ be a best-approximating polytope which attains the infimum. We now define
    \[
\eta_{d,N,c}:=\frac{1}{2}\left(\frac{c\,\omega_{d}}{\kappa_{d-1}N}\right)^{\frac{2}{d-1}}.
    \]
Following the proof of Theorem \ref{inscribed-mw-cor-kfaces}, we obtain
\begin{align*}
    &\vol_{d-1}\left(\bigcup_{F\in\mathcal{F}_{d-1}((P_N^*)^\circ)}R(F\cap((1+\eta_{d,N,c})B_d))\right) \\&\leq \sum_{F\in\mathcal{F}_{d-1}((P_N^*)^\circ)}\vol_{d-1}\left(B_{d-1}(\dist(o,F)\xi_F,\sqrt{2\eta_{d,N,c}+\eta^2_{d,N,c}})\right)\\
    &=N(2\eta_{d,N,c})^{\frac{d-1}{2}}\left(1+\frac{\eta_{d,N,c}}{2}\right)^{\frac{d-1}{2}}\kappa_{d-1}
    =\left(1+\frac{\eta_{d,N,c}}{2}\right)^{\frac{d-1}{2}}\cdot c\,\omega_{d}.
\end{align*}
Hence, setting $\mathcal{U}_{P_N^*}(c):=\{u\in\Sp:\, h_{P_N^*}(u)\leq\tfrac{1}{1+\eta_{d,N,c}}\}$, we get 
\[
\vol_{d-1}(\mathcal{U}_{P_N^*}(c)) \geq \left(1-c\left(1+\frac{\eta_{d,N,c}}{2}\right)^{\frac{d-1}{2}}\right)\omega_{d}.
\]
Now fix $\varepsilon>0$. Since $\eta_{d,N,c}\to 0$ as $N\to\infty$, there exists $N(d,c,\varepsilon)$ such that 
\[
\forall N\geq N(d,c,\varepsilon),\quad \left(1+\frac{\eta_{d,N,c}}{2}\right)^{\frac{d-1}{2}} \leq \exp\left(\frac{d-1}{4}\eta_{d,N,c}\right)\leq 1+\varepsilon.
\]
Therefore,
\[
\forall N\geq N(d,c,\varepsilon),\quad \vol_{d-1}(\mathcal{U}_{P_N^*}(c)) \geq  \left(1-c(1+\varepsilon)\right)\omega_{d}.
\]
Thus, for all $N\geq N(d,c,\varepsilon)$ we have
\begin{align*}
\Delta_w(B_d,P_N^*) &\geq \frac{2\eta_{d,N,c}}{1+\eta_{d,N,c}}\cdot\frac{\vol_{d-1}(\mathcal{U}_{P_N^*}(c))}{\omega_{d}}    \geq \frac{2\eta_{d,N,c}}{1+\eta_{d,N,c}}\left(1-c(1+\varepsilon)\right).
\end{align*}
Since $\eta_{d,N,c}\to 0$ as $N\to\infty$, for all sufficiently large $N$,
\begin{align*}
    \frac{2\eta_{d,N,c}}{1+\eta_{d,N,c}}=2\eta_{d,N,c}\left(1-\eta_{d,N,c}+\eta_{d,N,c}^2-\eta_{d,N,c}^3-\cdots\right)=2\eta_{d,N,c}+O(\eta_{d,N,c}^2).
\end{align*}
In fact, 
\[
\forall\eta_{d,N,c}\in[0,1/2], \quad \frac{2\eta_{d,N,c}}{1+\eta_{d,N,c}} \geq 2\eta_{d,N,c}-2\eta_{d,N,c}^2.
\]
Thus, for all sufficiently large $N$ we have
\[
\Delta_w(B_d,P_N^*) \geq 2\eta_{d,N,c}\left(1-c(1+\varepsilon)\right)-C(d,c)N^{-\frac{4}{d-1}}
\]
where $C(d,c)>0$ is a constant that can be explicitly computed. Using the definition of $\eta_{d,N,c}$ and rearranging terms, we get that for all sufficiently large $N$,
\[
\Delta_w(B_d,P_N^*) \geq c^{\frac{2}{d-1}}\left(1-c(1+\varepsilon)\right)\left(\frac{\omega_{d}}{N\kappa_{d-1}}\right)^{\frac{2}{d-1}}-C(d,c)N^{-\frac{4}{d-1}}.
\]

Now multiply both sides by $N^{\frac{2}{d-1}}$ and let $N\to\infty$ (for fixed $c,\varepsilon$). Since the error term $C(d,c)\NN$ tends to 0 as $N$ tends to infinity, we obtain
\[
\liminf_{N\to\infty}N^{\frac{2}{d-1}}\Delta_w(B_d,P_N^*) \geq  c^{\frac{2}{d-1}}\left(1-c(1+\varepsilon)\right)\left(\frac{\omega_{d}}{\kappa_{d-1}}\right)^{\frac{2}{d-1}}.
\]
Letting $\varepsilon\to 0$, we get that for each fixed $c\in(0,1)$, 
\[
\liminf_{N\to\infty}N^{\frac{2}{d-1}}\Delta_w(B_d,P_N^*) \geq  c^{\frac{2}{d-1}}\left(1-c\right)\left(\frac{\omega_{d}}{\kappa_{d-1}}\right)^{\frac{2}{d-1}}.
\]

Finally, we optimize in $c\in[0,1]$. The maximum of the function $f:[0,1]\to[0,1]$ defined by $f(c)=c^{\frac{2}{d-1}}(1-c)$ is attained at $c^*=\frac{2}{d+1}\in(0,1)$, with 
\[
f(c^*) = \frac{d-1}{d+1}\left(\frac{2}{d+1}\right)^{\frac{2}{d-1}}.
\] 
Thus,
\[
\liminf_{N\to\infty}N^{\frac{2}{d-1}}\Delta_w(B_d,P_N^*) \geq  \frac{d-1}{d+1}\left(\frac{2\omega_{d}}{(d+1)\kappa_{d-1}}\right)^{\frac{2}{d-1}}.
\]
By the definition of the best-approximating polytope $P_N^*$, this is the claimed inequality.
\end{proof}

Following the proof of Theorem \ref{circumscribed-vol-cor-kfaces}, we obtain the following
\begin{theorem}\label{vol-asymptotic}
    \begin{align*}
\lim_{N\to\infty} N^{\frac{2}{d-1}}\inf\{\vol_d(Q_N\setminus B_d):\,Q_N \supset B_d\text{ has at most }N\text{ facets}\} 
\geq \frac{\omega_{d}}{2}\,\frac{d-1}{d+1}\left(\frac{2\omega_{d}}{(d+1)\kappa_{d-1}}\right)^{\frac{2}{d-1}}.
    \end{align*}
\end{theorem}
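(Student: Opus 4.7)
The plan is to reduce the circumscribed volume problem to the inscribed mean width problem solved in Theorem~\ref{mw-asymptotic}, following exactly the template used in the proof of Theorem~\ref{circumscribed-vol-cor-kfaces} but now carrying through the asymptotic sharp constant rather than a mere absolute estimate. The hint from the passage immediately preceding the theorem confirms this is the right route.

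First, I would fix a polytope $Q_N \supset B_d$ with at most $N$ facets and pass to its polar $P_N := Q_N^{\circ}$. Since $B_d \subset Q_N$, the origin lies in the interior of $Q_N$, so $P_N$ is a bounded polytope with $o \in \operatorname{int}(P_N)$, contained in $B_d$, and with $f_0(P_N) = f_{d-1}(Q_N) \leq N$; moreover, the map $Q_N \leftrightarrow Q_N^{\circ}$ is a bijection between the two families. The Glasauer--Gruber formula \cite{glasgrub}, applied to the inscribed polytope $P_N$, gives
\[
w(B_d) - w(P_N) \;=\; \frac{2}{\vol_{d-1}(\partial B_d)} \int_{Q_N \setminus B_d} \|x\|_2^{-(d+1)} \, dx \;\leq\; \frac{2}{\vol_{d-1}(\partial B_d)}\, \vol_d(Q_N \setminus B_d),
\]
where the inequality uses $\|x\|_2 \geq 1$ on $P_N^{\circ}\setminus B_d = Q_N \setminus B_d$. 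Rearranging,
\[
\vol_d(Q_N \setminus B_d) \;\geq\; \frac{\vol_{d-1}(\partial B_d)}{2}\, \Delta_w(B_d, P_N).
\]

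The key step is then to take the infimum of the left-hand side over all admissible $Q_N$; by the polar bijection above, this transfers to the infimum of $\Delta_w(B_d, P_N)$ over all polytopes $P_N \subset B_d$ with at most $N$ vertices and $o \in \operatorname{int}(P_N)$. Multiplying through by $N^{2/(d-1)}$ and passing to $\liminf_{N \to \infty}$, Theorem~\ref{mw-asymptotic} applies to the right-hand side and yields directly the stated asymptotic lower bound
\[
\liminf_{N\to\infty} N^{\frac{2}{d-1}} \inf\{\vol_d(Q_N \setminus B_d)\} \;\geq\; \frac{\vol_{d-1}(\partial B_d)}{2}\cdot \frac{d-1}{d+1}\left(\frac{2\vol_{d-1}(\partial B_d)}{(d+1)\vol_{d-1}(B_{d-1})}\right)^{\frac{2}{d-1}}.
\]

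I do not anticipate any substantive obstacle: the delicate asymptotic analysis (optimizing the exponent $c \in (0,1)$, controlling the error term from $2\eta/(1+\eta)$, and invoking Stirling) has already been carried out once and for all in Theorem~\ref{mw-asymptotic}, and the present statement is just its polar shadow. The only mild point to verify is that the class of inscribed polytopes appearing on the right-hand side, which implicitly carries the constraint $o \in \operatorname{int}(P_N)$ inherited from Theorem~\ref{inscribed-mw-cor-kfaces}, receives \emph{every} $P_N$ arising as some $Q_N^{\circ}$; this is automatic since $Q_N \supset B_d$ forces $o \in \operatorname{int}(Q_N)$ and hence $Q_N^{\circ}$ is bounded with $o$ in its interior.
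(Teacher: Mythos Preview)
Your proposal is correct and is precisely the approach the paper takes: it says only ``Following the proof of Theorem~\ref{circumscribed-vol-cor-kfaces}, we obtain the following,'' meaning one combines the polarity inequality \eqref{polarity-formula-1} with the asymptotic mean width bound of Theorem~\ref{mw-asymptotic} in place of the nonasymptotic Theorem~\ref{inscribed-mw-cor-kfaces}. Your handling of the $o\in\operatorname{int}(P_N)$ constraint is fine, since the infimum over the subclass $\{Q_N^\circ\}$ dominates the infimum over all inscribed polytopes with at most $N$ vertices, which is what Theorem~\ref{mw-asymptotic} bounds from below.
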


\subsection{On the Dirichlet--Voronoi tiling number $\operatorname{div}_{d-1}$}

Hoehner and Kur \cite[Theorem 1.4]{HK-DCG} proved that \begin{equation}\label{HK-div}
\left|\operatorname{div}_{d-1}-(2\pi e)^{-1}(d+\ln d)\right|=O(1).
\end{equation}
Following the arguments in their proof, one finds that the constant term $O(1)$ is at most
\begin{equation}\label{c1-eqn}
    c_1 = \frac{2d}{d+1}\ln\left(\frac{8d}{d+1}(2\pi d)^{\frac{1}{d-1}}\right)\sim 2\ln 8.
\end{equation}
The proof in \cite{HK-DCG} used a polytope $P_b$ with at most $N$ facets (all with the same height $t_{d,N}\in(0,1)$), which is neither contained in nor contains the ball, and inflated it so that it circumscribes the ball. Next, they used known results on the approximation of the ball in these two settings to derive their estimate for $\operatorname{div}_{d-1}$. We refer the reader to \cite{HK-DCG} for the details. 

Using the results in this paper, we give a new,  simpler proof of this estimate which uses only one mode of approximation (circumscribed), rather than two as in \cite{HK-DCG}.

\begin{theorem}\label{div-estimate}
    \[
    \frac{d-1}{d+1} \left(\frac{2}{(d+1)\kappa_{d-1}}\right)^{\frac{2}{d-1}} \leq  \operatorname{div}_{d-1} 
    \leq \frac{2}{d-1}\cdot\frac{\Gamma\left(\frac{2}{d-1}\right)}{\kappa_{d-1}^{\frac{2}{d-1}}}.
    \]
\end{theorem}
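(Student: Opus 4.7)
The lower bound is an immediate consequence of Theorem~\ref{mw-asymptotic} and the Glasauer--Gruber formula \eqref{mw-asymptotic-glas-grub-ball}. Indeed, \eqref{mw-asymptotic-glas-grub-ball} evaluates $\lim_{N\to\infty}N^{2/(d-1)}\inf\{\Delta_w(B_d,P_N)\}$ as $\operatorname{div}_{d-1}\cdot\vol_{d-1}(\partial B_d)^{2/(d-1)}$, while Theorem~\ref{mw-asymptotic} bounds this limit below by $\tfrac{d-1}{d+1}\bigl(2\vol_{d-1}(\partial B_d)/((d+1)\vol_{d-1}(B_{d-1}))\bigr)^{2/(d-1)}$. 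Dividing by $\vol_{d-1}(\partial B_d)^{2/(d-1)}$ yields the claimed lower bound on $\operatorname{div}_{d-1}$.

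For the upper bound, I would use the probabilistic method. Let $X_1,\ldots,X_N$ be i.i.d.\ uniform on $\Sp$ and set $P_N^{\rm rand}:=\conv\{X_1,\ldots,X_N\}\subset B_d$; this polytope has at most $N$ vertices and is therefore a competitor in the infimum appearing in \eqref{mw-asymptotic-glas-grub-ball}. By rotational invariance,
\[
\E\bigl[\Delta_w(B_d,P_N^{\rm rand})\bigr]=2\,\E\!\left[\,1-\max_{1\le i\le N}\langle X_i,e_d\rangle\right].
\]
Writing $Z:=\langle X_1,e_d\rangle$, the density of $Z$ is $f(z)=(d-1)\vol_{d-1}(B_{d-1})(1-z^{2})^{(d-3)/2}/\vol_{d-1}(\partial B_d)$ on $[-1,1]$, and the substitution $s=1-z$ together with $1-z^{2}=s(2-s)\sim 2s$ yields
\[
p(t):=\Pr(Z>1-t)=\lambda\,t^{(d-1)/2}\bigl(1+o(1)\bigr)\quad\text{as }t\to 0^{+},\qquad \lambda:=\frac{2^{(d-1)/2}\vol_{d-1}(B_{d-1})}{\vol_{d-1}(\partial B_d)}.
\]

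Setting $T:=1-\max_{i}\langle X_i,e_d\rangle$, we have $\E[T]=\int_{0}^{\infty}(1-p(t))^{N}\,dt$. Applying the change of variables $u=N\lambda t^{(d-1)/2}$ and passing to the limit via dominated convergence (with the pointwise bound $(1-u/N)^{N}\le e^{-u}$) gives
\[
\lim_{N\to\infty}N^{2/(d-1)}\E[T]=\frac{2\,\Gamma(2/(d-1))}{(d-1)\,\lambda^{2/(d-1)}}=\frac{\Gamma(2/(d-1))\vol_{d-1}(\partial B_d)^{2/(d-1)}}{(d-1)\vol_{d-1}(B_{d-1})^{2/(d-1)}}.
\]
Doubling this quantity, then combining $\inf\{\Delta_{w}(B_d,P_N)\}\le\E[\Delta_{w}(B_d,P_N^{\rm rand})]$ with \eqref{mw-asymptotic-glas-grub-ball} and dividing by $\vol_{d-1}(\partial B_d)^{2/(d-1)}$, produces the upper bound $\operatorname{div}_{d-1}\le \tfrac{2\Gamma(2/(d-1))}{(d-1)\vol_{d-1}(B_{d-1})^{2/(d-1)}}$.

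The principal technical obstacle is justifying the limit passage for $\E[T]$. One must verify that the portion of $\int_{0}^{\infty}(1-p(t))^{N}\,dt$ coming from $t$ outside a shrinking neighborhood of $0$ (say $t\ge N^{-2/(d-1)}\log N$) contributes only $o(N^{-2/(d-1)})$, and that the sub-leading remainder in $p(t)=\lambda t^{(d-1)/2}(1+o(1))$ does not affect the leading term after the scaling substitution. Both points are standard but require careful bookkeeping, and follow from the explicit expression for $f$ together with the exponential envelope $(1-p(t))^{N}\le e^{-Np(t)}$ once $Np(t)$ is large.
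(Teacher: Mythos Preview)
Your argument is correct and follows essentially the same route as the paper. The lower bound is identical. For the upper bound, the paper simply invokes M\"uller's asymptotic formula
\[
\lim_{N\to\infty}N^{\frac{2}{d-1}}\bigl(w(B_d)-\E[w(P_N)]\bigr)=\frac{2}{d-1}\left(\frac{\vol_{d-1}(\partial B_d)}{\vol_{d-1}(B_{d-1})}\right)^{\frac{2}{d-1}}\Gamma\!\left(\frac{2}{d-1}\right)
\]
for the random polytope with i.i.d.\ uniform vertices on $\Sp$, while you re-derive this formula from scratch via the extreme-value computation; once the formula is in hand, both proofs conclude identically by comparing with \eqref{mw-asymptotic-glas-grub-ball} and dividing out $\vol_{d-1}(\partial B_d)^{2/(d-1)}$.
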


This gives the same asymptotic estimate for $\operatorname{div}_{d-1}$ as the one given in  \cite{HK-DCG}, that is $\operatorname{div}_{d-1}=\frac{d}{2\pi e}(1+O(\tfrac{\ln d}{d}))$, where the error term $O(\tfrac{\ln d}{d})$ can be computed explicitly. It can be shown that our estimate is better than the one in \cite{HK-DCG} for $d$ less than about $10^{15}$.

\begin{proof}
Combining Theorem \ref{mw-asymptotic} with \eqref{mw-asymptotic-glas-grub}, we obtain
\begin{equation}
    \operatorname{div}_{d-1}\cdot\omega_{d}^{\frac{2}{d-1}} \geq \frac{d-1}{d+1}\left(\frac{2\omega_{d}}{(d+1)\kappa_{d-1}}\right)^{\frac{2}{d-1}}.
\end{equation}
Dividing both sides of this inequality by $\omega_{d}^{\frac{2}{d-1}}$, we obtain the lower bound.

  We now prove the upper bound.  A special case of a result of M\"uller \cite{Muller-mw} states that if $X_1,\ldots,X_N$ are random points selected independently and uniformly from $\partial B_d$, and $P_N$ is the convex hull of $\{X_1,\ldots,X_N\}$,  then
\begin{equation}\label{muller-mw}
    \begin{split}
        \lim_{N\to\infty}N^{\frac{2}{d-1}}\left(w(B_d)-\E(w(P_N))\right)
        =\frac{2}{d-1}\left(\frac{\omega_{d}}{\kappa_{d-1}}\right)^{\frac{2}{d-1}}\Gamma\left(\frac{2}{d-1}\right).
    \end{split}
\end{equation}
Combining this with \eqref{mw-asymptotic-glas-grub-ball}, we get
\begin{align*}
      \operatorname{div}_{d-1}\cdot\omega_{d}^{\frac{2}{d-1}}&=\lim_{N\to\infty} N^{\frac{2}{d-1}}\inf\{\Delta_w(B_d,P_N):\,P_N \subset B_d\text{ has at most }N\text{ vertices}\}\\
&\leq \lim_{N\to\infty}N^{\frac{2}{d-1}}\left(w(B_d)-\E(w(P_N))\right)
=\frac{2}{d-1}\left(\frac{\omega_{d}}{\kappa_{d-1}}\right)^{\frac{2}{d-1}}\Gamma\left(\frac{2}{d-1}\right).
\end{align*}
Dividing both sides of this inequality by $\omega_{d}^{\frac{2}{d-1}}$, we obtain the upper bound.
\end{proof}

\section{Proof of Theorem \ref{mainThm}}\label{mainThm-sec}

The strategy of the proof is as follows. To prove (i), we apply  the extended isoperimetric inequality (\ref{extisoineq}), the nonasymptotic lower bound in Theorem \ref{inscribed-mw-cor-kfaces} for the mean width difference of an inscribed polytope with $N$ vertices and the Euclidean ball, and  \eqref{vertices-bd}  to obtain the result for $k\in\{0,1,\ldots,\lfloor d/2\rfloor\}$ and all $j\in\{1,\ldots,d\}$.  Similarly, to prove (ii), we apply the extended isoperimetric inequality, the nonasymptotic lower bound in Theorem \ref{circumscribed-vol-cor-kfaces} for the volume difference of a circumscribed polytope with $N$ facets and the Euclidean ball, and \eqref{facets-bd} to obtain the result for $k\in\{\lceil d/2\rceil-1,\ldots,d-1\}$ and all $j\in\{1,\ldots,d\}$. 

(i)  (i) Let $P_M\subset B_d$ with $o\in\operatorname{int}(P_M)$. Using the extended isoperimetric inequality, it was shown in \cite[Inequality (55)]{BHK} that for every $j\in\{1,\ldots,d\}$,
\begin{equation}\label{BHK-lower-2}
    \frac{V_j(B_d)-V_j(P_M)}{V_j(B_d)} \geq 1-\left(1-\frac{V_1(B_d)-V_1(P_M)}{V_1(B_d)}\right)^j=1-\left(1-\frac{w(B_d)-w(P_M)}{w(B_d)}\right)^j
\end{equation}
where in the equality we used the fact that for any convex body $K$ in $\R^d$, the first intrinsic volume is a constant multiple of the mean width, namely, 
$V_1(K)=\frac{\omega_{d}}{2\kappa_{d-1}}w(K)$. Using \eqref{BHK-lower-2} and Theorem \ref{inscribed-mw-cor-kfaces}, we get
\begin{align*}
    \delta_j(B_d,P_M) &\geq V_j(B_d)\left(1-\left(1-\frac{w(B_d)-w(P_M)}{w(B_d)}\right)^j\right)\\
        &\geq \left(1-\left(1-\frac{1}{4}\left(\frac{\omega_{d}}{4\kappa_{d-1}}\right)^{\frac{2}{d-1}}M^{-\frac{2}{d-1}}\right)^j\right)V_j(B_d).
\end{align*}
Now using the inequality $(1-(1-x)^j) \geq jx-\frac{j(j-1)}{2}x^2$, which holds for all $x\in[0,1)$ and all integers $j\geq 1$, we get
\begin{align*}
    \delta_j(B_d,P_M) 
        \geq \frac{j}{4}\left(\frac{\omega_{d}}{4\kappa_{d-1}}\right)^{\frac{2}{d-1}}V_j(B_d)M^{-\frac{2}{d-1}}
        \left(1-\frac{j-1}{8}\left(\frac{\omega_{d}}{4\kappa_{d-1}}\right)^{\frac{2}{d-1}}M^{-\frac{2}{d-1}}\right).
\end{align*}

Now to prove the second part of (i), note that for any $c\in(0,1)$,
\[
1-\frac{j-1}{8}\left(\frac{\omega_{d}}{4\kappa_{d-1}}\right)^{\frac{2}{d-1}}M^{-\frac{2}{d-1}} \geq c
\]
if and only if $M\geq \frac{\omega_{d}}{4\kappa_{d-1}}\left(\frac{j-1}{8(1-c)}\right)^{\frac{d-1}{2}}$.

(ii) Using the extended isoperimetric inequality, it was shown in \cite[Inequality (58)]{BHK} that for any polytope $P_M\supset B_d$ and any $j\in\{1,\ldots,d\}$, we have
    \begin{equation}
        \left(1+\frac{V_d(P_M)-V_d(B_d)}{V_d(B_d)}\right)^{\frac{j}{d}} -1 \leq \frac{V_j(P_M)-V_j(B_d)}{V_j(B_d)}.
    \end{equation}
Using this estimate and Theorem \ref{circumscribed-vol-cor-kfaces}, we  obtain
\begin{align*}
    \delta_j(B_d,P_M) &\geq V_j(B_d)\left(\left(1+\frac{V_d(P_M)-V_d(B_d)}{V_d(B_d)}\right)^{\frac{j}{d}} -1\right)\\
    &\geq\left(\left(1+ \frac{d}{8}\left(\frac{\omega_{d}}{4\kappa_{d-1}}\right)^{\frac{2}{d-1}}M^{-\frac{2}{d-1}}\right)^{\frac{j}{d}}-1\right)V_j(B_d).
\end{align*}

Using the inequality $(1+x)^r\geq 1+rx-\frac{r(1-r)}{2}x^2$, which holds for all $x\geq 0$ and all $r\in(0,1)$, with 
$x=\alpha_{d,M}:=\frac{d}{8}\left(\frac{\omega_{d}}{4\kappa_{d-1}}\right)^{\frac{2}{d-1}}M^{-\frac{2}{d-1}}\geq 0$ and $r=j/d\in(0,1]$ we get
\begin{align*}
\delta_j(B_d,P_M) \geq \frac{j}{8}\left(\frac{\omega_{d}}{4\kappa_{d-1}}\right)^{\frac{2}{d-1}}M^{-\frac{2}{d-1}}
\left(1-\frac{d-j}{16}\left(\frac{\omega_{d}}{4\kappa_{d-1}}\right)^{\frac{2}{d-1}}M^{-\frac{2}{d-1}}\right)V_j(B_d).
\end{align*}

The second assertion of (ii) is proved in the same way as the second assertion of part (i). 
 \qed


\section{Hausdorff approximation}\label{hausdorff-sec}

In this section, we obtain lower bounds for the Hausdorff approximation of the Euclidean ball by polytopes with a fixed number of $k$-faces. Recall that for convex bodies $K$ and $L$ in $\R^d$, the \emph{Hausdorff distance} $d_H(K,L)$ is defined by
\[
d_H(K,L) = \inf\{\lambda\geq 0:\,K\subset L+\lambda B_d, L\subset K+\lambda B_d\}.
\]
For background on the Hausdorff approximation of convex bodies by polytopes, we refer the reader to, for example, \cite{Bronshtein-survey,glasauer-schneider,GruberI, PSoSchW-2025} and the references therein.

\begin{theorem}[Hausdorff approximation by inscribed polytopes]\label{inscribed-Hausdorff-kfaces}
    Let $d\geq 2$ and $k\in\{0,1,\ldots,d-1\}$. For all polytopes $P_M\subset B_d$ with at most $M$ $k$-faces, we have
    \begin{equation}
        d_H(B_d,P_M) \geq \frac{1}{2}\left(\frac{\vol_{d-1}(\partial P_M)}{4\omega_{d}}\right)^{\frac{2}{d-1}}M^{-\frac{2}{d-1}}.
    \end{equation}
   \end{theorem}

By Stirling's inequality, this implies that for all polytopes $P_M\subset B_d$ satisfying $\vol_{d-1}(\partial P_M) \geq c\vol_{d-1}(\partial B_d)$ for some absolute constant $c>0$, we have $d_H(B_d,P_M)\geq c_1 M^{-\frac{2}{d-1}}$ for all $k\in\{0,1,\ldots,d-1\}$. When $k\in\{0,d-1\}$, Theorem \ref{inscribed-Hausdorff-kfaces} gives the best possible estimate, up to an absolute constant (see \cite{Bronshtein-survey} and the references therein).  To the best of our knowledge, the cases $k\in\{1,\ldots,d-2\}$ are new for general dimensions $d\geq 2$.

\begin{theorem}[Hausdorff approximation by circumscribed polytopes]\label{circumscribed-Hausdorff-kfaces}
    Let $d\geq 2$ and $k\in\{0,1,\ldots,d-1\}$.  For all polytopes $B_d\subset P_M\subset\sqrt{2}B_d$ with at most $M$ $k$-faces, we have
    \begin{equation}
        d_H(B_d,P_M) \geq \frac{1}{3}\left(\frac{\vol_{d-1}(\partial P_M)}{4\omega_{d}}\right)^{\frac{2}{d-1}}M^{-\frac{2}{d-1}}.
    \end{equation}
\end{theorem}

By Stirling's inequality, this implies that for all $B_d\subset P_M\subset \sqrt{2}B_d$ and all $k\in\{0,1,\ldots,d-1\}$, we have $d_H(B_d,P_M)\geq cM^{-\frac{2}{d-1}}$ for some absolute constant $c>0$. When $k\in\{0,d-1\}$, Theorem \ref{circumscribed-Hausdorff-kfaces} gives the best possible estimate, up to an absolute constant (see \cite{Bronshtein-survey} and the references therein). To the best of our knowledge, the cases $k\in\{1,\ldots,d-2\}$ are new for general dimensions $d\geq 2$.
\subsection{Proof of Theorem \ref{inscribed-Hausdorff-kfaces}}

\subsubsection{Case 1: $k\in\{\lceil\tfrac{d}{2}\rceil-1,\ldots,d-1\}$}

This case follows from Lemma \ref{inclusion-chain} and the following lemma.

\begin{lemma}\label{inscribed-Hausdorff-facets}
    Let $d\geq 2$. For all polytopes $P_N\subset B_d$ with at most $N$ facets, we have
     \begin{equation}
        d_H(B_d,P_N) \geq \frac{1}{2}\left(\frac{\vol_{d-1}(\partial P_N)}{4\kappa_{d-1}}\right)^{\frac{2}{d-1}}N^{-\frac{2}{d-1}}.
    \end{equation}
\end{lemma}

\begin{proof}
Let $\eta_{P_N}:=\frac{1}{2}\left(\frac{\vol_{d-1}(\partial P_N)}{4\kappa_{d-1}N}\right)^{\frac{2}{d-1}}$. Suppose by way of contradiction that for every $F\in\mathcal{F}_{d-1}(P_N)$, we have $\dist(o,F)>1-\eta_{P_N}$. Then each facet $F$ of $P_N$ is contained in a $(d-1)$-dimensional ball of radius $\sqrt{2\eta_{P_N}-\eta_{P_N}^2}<\sqrt{2\eta_{P_N}}$. This implies
\begin{align*}
\vol_{d-1}(\partial P_N) = \sum_{F\in\mathcal{F}_{d-1}(P_N)}\vol_{d-1}(F)
&<\sum_{F\in\mathcal{F}_{d-1}(P_N)}\vol_{d-1}(\sqrt{2\eta_{P_N}}B_{d-1})\\
&=N(2\eta_{P_N})^{\frac{d-1}{2}}\kappa_{d-1}
=\frac{\vol_{d-1}(\partial P_N)}{4},
\end{align*}
a contradiction. Thus, there exists $F_0\in\mathcal{F}_{d-1}(P_N)$ such that $\dist(o,F_0) \leq 1-\eta_{P_N}$. Therefore,
\begin{align*}
    d_H(B_d,P_N) \geq \max\left\{1-\dist(o,F):\,F\in\mathcal{F}_{d-1}(P_N)\right\}
    \geq 1-\dist(o,F_0)
    \geq 1-(1-\eta_{P_N})
    =\eta_{P_N}.
\end{align*}
\end{proof}

\begin{remark}
For $c\in(0,1)$, define $\eta_{P_N,c}:=\frac{1}{2}\left(c\cdot\frac{\vol_{d-1}(\partial P_N)}{\kappa_{d-1}N}\right)^{\frac{2}{d-1}}$. Then as before, we obtain 
\[
\vol_{d-1}(\partial P_N) < c\cdot\vol_{d-1}(\partial P_N),
\]
a contradiction. Thus, as before, $d_H(B_d,P_N)\geq \eta_{P_N,c}$. Since $c\in(0,1)$ was arbitrary, by the continuity of the function $x\mapsto x^{\frac{2}{d-1}}$ on $[0,1]$, we get
\[
d_H(B_d,P_N)\geq \lim_{c\to 1^-} \eta_{P_N,c}=\eta_{P_N,1}=\frac{1}{2}\left(\frac{\vol_{d-1}(\partial P_N)}{\kappa_{d-1}}\right)^{\frac{2}{d-1}}\NN.
\]
The same remark applies to Lemma \ref{circumscribed-vertices-Hausdorff} below (and we can essentially improve the constant factor $1/3$ to $1/2$ there as well).
\end{remark}

\subsubsection{Case 2: $k\in\{0,1,\ldots,\lfloor\tfrac{d}{2}\rfloor\}$}

This case follows from Lemma \ref{inclusion-chain} and the following lemma.

\begin{lemma}\label{inscribed-Hausdorff-vertices}
    Let $d\geq 2$. For all polytopes $P_N\subset B_d$ with at most $N$ vertices, we have
     \begin{equation}
        d_H(B_d,P_N) \geq \frac{1}{2}\left(\frac{\vol_{d-1}(\partial P_N)}{4\omega_{d}}\right)^{\frac{2}{d-1}}N^{-\frac{2}{d-1}}.
    \end{equation}
\end{lemma}

\begin{proof}
    The proof is similar to that of Lemma \ref{inscribed-Hausdorff-facets}. In the vertices case, we define $r_{P_N}:=\frac{1}{2}\left(\frac{\vol_{d-1}(\partial P_N)}{4\omega_{d}N}\right)^{\frac{2}{d-1}}$. Note that if $\dist(o,F)>1-r_{P_N}$ for all facets $F$ of $P_N$, then
    \[
    P_N \subsetneq \bigcup_{v\in\mathcal{F}_0(P_N)}B_d(v,\sqrt{2r_{P_N}})
    \]
    where $B_d(v,\sqrt{2r_{P_N}})$ is the $d$-dimensional Euclidean ball with center $v$ and radius $\sqrt{2r_{P_N}}$. 
    Since $P_N$ is convex and is a strict subset of the union of the balls, this implies
    \[
\vol_{d-1}(\partial P_N) < \sum_{v\in\mathcal{F}_0(P_N)}\vol_{d-1}\left(\partial B_d(v,\sqrt{2r_{P_N}})\right)=N(2r_{P_N})^{\frac{d-1}{2}}\omega_{d}
=\frac{\vol_{d-1}(\partial P_N)}{4},
    \]
    a contradiction. Thus, there exists a facet $F_0$ of $P_N$ such that $\dist(o,F_0)\leq 1-r_{P_N}$. Therefore,
\begin{align*}
    d_H(B_d,P_N) \geq \max\left\{1-\dist(o,F):\,F\in\mathcal{F}_{d-1}(P_N)\right\}
    \geq 1-\dist(o,F_0)
    \geq 1-(1-r_{P_N})
    =r_{P_N}.
\end{align*}
\end{proof}

\subsection{Proof of Theorem \ref{circumscribed-Hausdorff-kfaces}}

\subsubsection{Case 1: $k\in\{0,1,\ldots,\lfloor\tfrac{d}{2}\rfloor\}$} We follow a dual argument to the proof of Theorem \ref{inscribed-Hausdorff-kfaces}. This case follows from \eqref{vertices-bd} and the following lemma.

\begin{lemma}\label{circumscribed-vertices-Hausdorff}
    Let $d\geq 2$. For all polytopes $B_d\subset P_N\subset\sqrt{2}B_d$ with at most $N$ vertices, we have
     \begin{equation}
        d_H(B_d,P_N) \geq \frac{1}{3}\left(\frac{\vol_{d-1}(\partial P_N)}{4\omega_{d}}\right)^{\frac{2}{d-1}}N^{-\frac{2}{d-1}}.
    \end{equation}
\end{lemma}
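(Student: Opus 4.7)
The plan is to mirror the proof of Lemma~\ref{inscribed-Hausdorff-vertices} in a dual fashion: argue by contradiction that if the Hausdorff distance were too small, then $\partial P_N$ could be covered by small Euclidean balls centered at the vertices of $P_N$, so that convex surface area monotonicity produces a contradiction against $\vol_{d-1}(\partial P_N)$ itself. Concretely, I would set
\[
r_{P_N}:=\frac{1}{3}\left(\frac{\vol_{d-1}(\partial P_N)}{4N\vol_{d-1}(\partial B_d)}\right)^{\frac{2}{d-1}},
\]
assume $d_H(B_d,P_N)<r_{P_N}$, and note that because $B_d\subset P_N$ the Hausdorff distance equals $\max_{v\in\mathcal{F}_0(P_N)}\|v\|-1$, so every vertex of $P_N$ satisfies $\|v\|<1+r_{P_N}$. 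The hypothesis $P_N\subset\sqrt 2B_d$ bounds $\vol_{d-1}(\partial P_N)$ above by $2^{(d-1)/2}\vol_{d-1}(\partial B_d)$, from which one checks that $r_{P_N}\leq 1$; this will be used below.

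For each facet $F$ of $P_N$ with outer unit normal $n_F$, the containment $B_d\subset P_N$ forces $h_F:=\dist(o,F)\geq 1$, while for every vertex $v$ of $F$ the orthogonal decomposition $v=h_F n_F+w_v$ with $w_v\perp n_F$ gives
\[
\|w_v\|^2=\|v\|^2-h_F^2\leq (1+r_{P_N})^2-1\leq 3r_{P_N},
\]
using $r_{P_N}\leq 1$. Thus all vertices of $F$ lie in the $(d-1)$-disk $B_{d-1}(h_F n_F,\sqrt{3r_{P_N}})$. The key observation is that the same radius controls the distance from any point of $F$ to its closest vertex. Writing $p_F:=h_F n_F$ and $x=\sum_i t_i v_i\in F$ (convex combination with $t_i\geq 0$ and $\sum_i t_i=1$), the identity $x-p_F=\sum_i t_i(v_i-p_F)$ combined with expansion of the squared norms yields
\[
\sum_i t_i\|x-v_i\|^2=\sum_i t_i\|v_i-p_F\|^2-\|x-p_F\|^2\leq 3r_{P_N},
\]
so $\min_i\|x-v_i\|\leq\sqrt{3r_{P_N}}$. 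Since every point of $\partial P_N$ belongs to some facet, this yields the covering
\[
\partial P_N\subset\bigcup_{v\in\mathcal{F}_0(P_N)}B_d\bigl(v,\sqrt{3r_{P_N}}\bigr).
\]

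Finally, I would apply surface area monotonicity for convex bodies: for each vertex $v$ the convex body $Q_v:=P_N\cap B_d(v,\sqrt{3r_{P_N}})$ is contained in $B_d(v,\sqrt{3r_{P_N}})$, so $\vol_{d-1}(\partial Q_v)\leq (3r_{P_N})^{(d-1)/2}\vol_{d-1}(\partial B_d)$, and each local piece $\partial P_N\cap B_d(v,\sqrt{3r_{P_N}})$ lies in $\partial Q_v$. Summing over the at most $N$ vertices and noting that the definition of $r_{P_N}$ is arranged so that $(3r_{P_N})^{(d-1)/2}=\vol_{d-1}(\partial P_N)/(4N\vol_{d-1}(\partial B_d))$, one obtains
\[
\vol_{d-1}(\partial P_N)\leq N(3r_{P_N})^{(d-1)/2}\vol_{d-1}(\partial B_d)=\frac{\vol_{d-1}(\partial P_N)}{4},
\]
the desired contradiction. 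I expect the main obstacle to be the closest-vertex bound inside a facet: the naive triangle inequality only yields $2\sqrt{3r_{P_N}}$, which would spoil the final constant $1/3$; the convex-combination (or variance) identity above is what makes the sharp radius $\sqrt{3r_{P_N}}$ available and allows the arithmetic to close up.
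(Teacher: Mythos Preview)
Your proof is correct and follows the same overall strategy as the paper: assume by contradiction that every vertex satisfies $\|v\|<1+r_{P_N}$, cover $\partial P_N$ by small balls centered at the vertices, and use surface area monotonicity to reach the contradiction $\vol_{d-1}(\partial P_N)\leq \tfrac14\vol_{d-1}(\partial P_N)$.

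The one genuine difference is in how the covering radius $\sqrt{3r_{P_N}}$ is justified. The paper asserts that ``the union $\mathscr{N}(v)$ of the facets incident to $v$ is contained in a $d$-dimensional ball of radius $\sqrt{2r_{P_N}+r_{P_N}^2}$ centered at $v$,'' and then bounds $\vol_{d-1}(\mathscr{N}(v))$ accordingly. Taken literally this claim is not correct (a facet through $v$ can extend as far as $2\sqrt{2r_{P_N}+r_{P_N}^2}$ from $v$, as the triangle inequality shows), so the paper is tacitly using something finer. Your convex-combination (variance) identity
\[
\sum_i t_i\|x-v_i\|^2=\sum_i t_i\|v_i-p_F\|^2-\|x-p_F\|^2\leq 3r_{P_N}
\]
is exactly the missing ingredient: it gives $\min_i\|x-v_i\|\leq\sqrt{3r_{P_N}}$ for every $x\in\partial P_N$, which is what is actually needed for the covering $\partial P_N\subset\bigcup_v B_d(v,\sqrt{3r_{P_N}})$. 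After that, your use of $Q_v=P_N\cap B_d(v,\sqrt{3r_{P_N}})$ and surface area monotonicity is the same as the paper's, and the arithmetic closes with the constant $1/3$ as stated.
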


    Let $r_{P_N}:=\frac{1}{3}\left(\frac{\vol_{d-1}(\partial P_N)}{4\omega_{d}N}\right)^{\frac{2}{d-1}}$. To prove the last lemma, we will need the following
\begin{lemma}\label{rpN-lemma}
        If $B_d\subset P_N\subset \sqrt{2}B_d$ has at most $N$ vertices, then $r_{P_N}\leq 1$.
    \end{lemma}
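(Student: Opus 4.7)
The plan is to turn the inequality $r_{P_N} \leq 1$ into a direct inequality for $\vol_{d-1}(\partial P_N)$, and then to control both factors in the definition of $r_{P_N}$ separately. Unpacking the definition, $r_{P_N} \leq 1$ is equivalent to
\[
\frac{\vol_{d-1}(\partial P_N)}{4N\,\vol_{d-1}(\partial B_d)} \leq 3^{(d-1)/2}.
\]

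For the numerator, the main ingredient is the monotonicity of surface area under inclusion of convex bodies. Since $P_N \subset \sqrt{2}\,B_d$, this gives
\[
\vol_{d-1}(\partial P_N) \leq \vol_{d-1}(\partial(\sqrt{2}\,B_d)) = 2^{(d-1)/2}\,\vol_{d-1}(\partial B_d).
\]
For the denominator, the inclusion $B_d \subset P_N$ forces $P_N$ to be full-dimensional, so $N \geq f_0(P_N) \geq d+1$. Inserting both bounds yields
\[
\frac{\vol_{d-1}(\partial P_N)}{4N\,\vol_{d-1}(\partial B_d)} \leq \frac{2^{(d-1)/2}}{4(d+1)}.
\]

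It then remains to verify that $2^{(d-1)/2}/[4(d+1)] \leq 3^{(d-1)/2}$, or equivalently $(2/3)^{(d-1)/2} \leq 4(d+1)$. For every $d \geq 2$, the left-hand side is at most $1$ while the right-hand side is at least $12$, so the inequality is trivial. Multiplying by $1/3$ and raising to the power $2/(d-1)$ then yields $r_{P_N} \leq 1$. There is really no obstacle in this argument; the only nontrivial input is the standard monotonicity of surface area for nested convex bodies, and everything else reduces to elementary arithmetic together with the trivial vertex count bound $N \geq d+1$.
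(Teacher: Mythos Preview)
Your proof is correct and follows essentially the same approach as the paper: both arguments combine the vertex bound $N\geq d+1$ (from $B_d\subset P_N$), the surface-area monotonicity $\vol_{d-1}(\partial P_N)\leq 2^{(d-1)/2}\vol_{d-1}(\partial B_d)$ (from $P_N\subset\sqrt{2}B_d$), and the elementary inequality $(2/3)^{(d-1)/2}<4(d+1)$. The only difference is presentational---the paper writes one chain of inequalities starting from $N\geq d+1$, while you bound numerator and denominator separately---and your final sentence should really read ``raising to the power $2/(d-1)$ and then multiplying by $1/3$,'' though the equivalence you stated at the outset already covers this.
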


    \begin{proof}
        Note that for every $d\geq 1$, since $P_N\supset B_d$, we have
\begin{align*}
N\geq d+1>\left(\frac{2}{3}\right)^{\frac{d-1}{2}}\cdot\frac{1}{4}= \left(\frac{2}{3}\right)^{\frac{d-1}{2}}\frac{\omega_{d}}{4\omega_{d}}
= \left(\frac{1}{3}\right)^{\frac{d-1}{2}}\frac{\vol_{d-1}(\partial (\sqrt{2}B_d))}{4\omega_{d}}
\geq\left(\frac{1}{3}\right)^{\frac{d-1}{2}}\frac{\vol_{d-1}(\partial P_N)}{4\omega_{d}}.
\end{align*}
Rearranging terms and raising both sides of the inequality to the power $2/(d-1)$, we obtain $r_{P_N}\leq 1$.
    \end{proof}

\begin{proof}[Proof of Lemma \ref{circumscribed-vertices-Hausdorff}] 
Suppose by way of contradiction that for every $v\in\mathcal{F}_0(P_N)$, we have $\|v\|_2<1+r_{P_N}$. Then the union $\mathscr{N}(v)$ of the facets incident to $v$ is contained  in a $d$-dimensional ball of radius $\sqrt{2r_{P_N}+r_{P_N}^2}<\sqrt{3r_{P_N}}$ centered at $v$, where in the inequality we used Lemma \ref{rpN-lemma}. This implies
\begin{align*}
\vol_{d-1}(\partial P_N) &=\vol_{d-1}\left(\bigcup_{v\in\mathcal{F}_0(P_N)}\bigcup_{F\in\mathscr{N}(v)}F\right)\leq\sum_{v\in\mathcal{F}_0(P_N)}\vol_{d-1}\left(\bigcup_{F\in\mathscr{N}(v)}F\right)\\
&\leq \sum_{v\in\mathcal{F}_0(P_N)}\vol_{d-1}\left(\partial (\sqrt{3r_{P_N}}B_d)\right)
= N(3r_{P_N})^{\frac{d-1}{2}}\omega_{d}=\frac{\vol_{d-1}(\partial P_N)}{4},
\end{align*}
a contradiction. Thus, there exists $v_0\in\mathcal{F}_0(P_N)$ such that $\|v_0\|_2 \geq 1+r_{P_N}$. Therefore,
\begin{align*}
    d_H(B_d,P_N) \geq \max\left\{\|v\|_2-1:\,v\in\mathcal{F}_0(P_N)\right\}
    \geq \|v_0\|_2-1
    \geq (1+r_{P_N})-1
    =r_{P_N}.
\end{align*}
\end{proof}

\subsubsection{Case 2: $k\in\{\lceil\tfrac{d}{2}\rceil-1,\ldots,d-1\}$}  This case follows from \eqref{facets-bd} and the following lemma.

\begin{lemma}
    Let $d\geq 2$. For all polytopes $B_d\subset P_N\subset\sqrt{2}B_d$ with at most $N$ facets, we have
     \begin{equation}
        d_H(B_d,P_N) \geq \frac{1}{3}\left(\frac{\vol_{d-1}(\partial P_N)}{4\kappa_{d-1}}\right)^{\frac{2}{d-1}}N^{-\frac{2}{d-1}}.
    \end{equation}
\end{lemma}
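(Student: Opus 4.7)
The plan is to mirror the proof of Lemma \ref{inscribed-Hausdorff-facets} almost line-for-line, with the inscribed/outer geometry interchanged. Define
\[
r_{P_N} := \frac{1}{3}\left(\frac{\vol_{d-1}(\partial P_N)}{4N\vol_{d-1}(B_{d-1})}\right)^{\frac{2}{d-1}},
\]
and argue by contradiction: assume $d_H(B_d,P_N) < r_{P_N}$. Since $B_d \subset P_N$, we have
\[
d_H(B_d,P_N)=\max\{\|x\|_2-1 : x\in P_N\},
\]
so this assumption yields $P_N\subset (1+r_{P_N})B_d$.

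Before proceeding, I would verify the analog of Lemma \ref{rpN-lemma}: namely, that $r_{P_N}\leq 1$. This uses $P_N\subset\sqrt{2}B_d$, which gives $\vol_{d-1}(\partial P_N)\leq (\sqrt{2})^{d-1}\vol_{d-1}(\partial B_d)$, together with $N\geq d+1$ (valid because $B_d\subset P_N$ forces $P_N$ to be $d$-dimensional, hence to have at least $d+1$ facets) and the Stirling-type bound $\vol_{d-1}(\partial B_d)/\vol_{d-1}(B_{d-1})\leq c\sqrt{d}$. A short computation of the same flavor as the one in Lemma \ref{rpN-lemma} then gives $r_{P_N}\leq 2/3\leq 1$.

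Next, I would use the key geometric observation: for every facet $F$ of $P_N$, its supporting hyperplane $H_F$ satisfies $\dist(o,H_F)\geq 1$ (since $B_d\subset P_N$), while $F\subset P_N\subset (1+r_{P_N})B_d$. Therefore $F$ lies inside the $(d-1)$-dimensional cross-sectional ball $H_F\cap (1+r_{P_N})B_d$, whose radius equals $\sqrt{(1+r_{P_N})^2-\dist(o,H_F)^2}\leq \sqrt{2r_{P_N}+r_{P_N}^2}\leq \sqrt{3r_{P_N}}$, where the last inequality uses $r_{P_N}\leq 1$.

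Summing over the at most $N$ facets then produces the desired contradiction:
\[
\vol_{d-1}(\partial P_N)=\sum_{F\in\mathcal{F}_{d-1}(P_N)}\vol_{d-1}(F)\leq N\,(3r_{P_N})^{\frac{d-1}{2}}\vol_{d-1}(B_{d-1})=\frac{\vol_{d-1}(\partial P_N)}{4}.
\]
Hence $d_H(B_d,P_N)\geq r_{P_N}$, which is the claim. No real obstacle arises; the only mildly technical step is the verification that $r_{P_N}\leq 1$ under the hypothesis $P_N\subset\sqrt{2}B_d$, which forces the constant $1/3$ (rather than the $1/2$ appearing in the inscribed lemma) in the definition of $r_{P_N}$, exactly as in Lemma \ref{circumscribed-vertices-Hausdorff}.
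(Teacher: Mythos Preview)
Your proof is correct and matches the paper's intended argument. The paper simply says the proof is ``similar to that of Lemma \ref{circumscribed-vertices-Hausdorff}, only now we use the factor $\eta_{P_N}$ instead of $r_{P_N}$,'' and what you wrote is exactly that adaptation: assume the Hausdorff distance is too small so that $P_N\subset(1+r_{P_N})B_d$, observe that each facet then lies in a $(d-1)$-ball of radius at most $\sqrt{3r_{P_N}}$ (using $\dist(o,H_F)\geq 1$ and $r_{P_N}\leq 1$), and sum over the $N$ facets to contradict the definition of $r_{P_N}$.
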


\begin{proof}
    Let $\eta_{P_N}:=\frac{1}{3}\left(\frac{\vol_{d-1}(\partial P_N)}{4\kappa_{d-1}N}\right)^{\frac{2}{d-1}}$. 
    As in the proof of Lemma \ref{rpN-lemma}, we have $\eta_{P_N}\leq 1$. Suppose by way of contradiction that $d_H(B_d,P_N)<\eta_{P_N}$. Then $P_N\subset (1+\eta_{P_N})B_d$. Since $B_d\subset P_N$, every facet of $P_N$ lies in a hyperplane whose distance from $o$ is at least $1$. Hence, each facet is contained in a $(d-1)$-dimensional ball of radius at most
    \[
        \sqrt{(1+\eta_{P_N})^2-1}
        =\sqrt{2\eta_{P_N}+\eta_{P_N}^2}
        \leq \sqrt{3\eta_{P_N}}.
    \]
    Therefore,
    \[
    \vol_{d-1}(\partial P_N)
    \leq
    N(3\eta_{P_N})^{\frac{d-1}{2}}\kappa_{d-1}
    =\frac{\vol_{d-1}(\partial P_N)}{4},
    \]
    a contradiction. Thus, $d_H(B_d,P_N)\geq \eta_{P_N}$.
\end{proof}

\begin{remark}
    A result of K. J. B\"or\"oczky \cite[Theorem A(ii)]{Boroczky-2000} applied to the Euclidean unit ball states that if $k\in\{0,1,\ldots,d-1\}$ and $P_M^*$ is a best-approximating polytope for $B_d$ with respect to the Hausdorff distance, then for all sufficiently large $M$,
    \[
d_H(B_d,P_M^*) > \frac{d}{34e\pi}\omega_{d}^{\frac{2}{d-1}}M^{-\frac{2}{d-1}}\geq \frac{1}{17}M^{-\frac{2}{d-1}}
    \]
where in the last inequality we used $\omega_{d}^{\frac{2}{d-1}}\geq\frac{2\pi e}{d}$, which holds by Stirling's inequality.   In B\"or\"oczky's result, there is no restriction on the position of the approximating polytope, whereas our lower bounds are for inscribed and circumscribed polytopes. Moreover, in contrast to B\"or\"oczky's result, our lower bounds do not require $M$ to be sufficiently large.
\end{remark}


\section{Discussion}\label{sec-discussion}

In the case of inscribed and circumscribed polytopes, in each part of Theorem \ref{mainThm} we  established lower bounds for half of the $f$-vector. Here we describe an approach that may yield lower bounds of the same order for the remaining values of $k$. 

    Let $P_N\supset B_d$ be a polytope with at most $N$ vertices, and set $t_{P_N}:=c_1\left(\frac{\vol_{d-1}(\partial P_N)}{4\omega_{d}N}\right)^{2/(d-1)}$. We conjecture that
    \begin{equation}\label{conjectured-ineq}
        \vol_{d-1}(\partial P_N\cap ((1+t_{P_N})B_d)^c) \geq c_2\,\omega_{d}
    \end{equation}
    for some absolute constants $c_1,c_2>0$. If this inequality is true, then arguments similar to those in this paper will yield the same lower bounds for the remaining $k$ in each case (inscribed and circumscribed). Inequality \eqref{conjectured-ineq} is plausible, as it says that the vertices of a circumscribed polytope cannot all be ``too close'' to the ball, and it is certainly true for $d=2$, as the proof of Theorem \ref{inscribed-mw-cor-kfaces} above shows.

\section*{Acknowledgments}

This material is based upon work supported by the National Science Foundation under Grant No. DMS-1929284 while the authors were in residence at the Institute for Computational and Experimental Research in Mathematics in Providence, RI, during the Harmonic Analysis and Convexity program. 
\par
In addition, Elisabeth Werner was supported by the National Science Foundation under grants DMS-2103482 and DMS-2506790.
\vskip 2mm
We would  like to thank the anonymous referee for their careful reading of the manuscript and for their many
suggestions which helped to improve our text.

\section{Appendix: Proofs of Lemma \ref{LSW-lower-bd} and Theorem \ref{mainThm-2}}\label{sec-appendix}

\subsection{Proof of Lemma \ref{LSW-lower-bd}}

To prove Lemma \ref{LSW-lower-bd}, we make very minor adjustments to the proof of Theorem 2 in \cite{LSW}. The only obstruction is that we cannot assume that our approximating polytope with $N$ facets is best-approximating for the Euclidean ball, which was assumed in the proof of  \cite[Theorem 2]{LSW}. This is because our theorem considers best-approximating polytopes with a fixed number of $k$-faces, and a best-approximating polytope with at most $N$ $k$-faces may be different, in general, from a best-approximating polytope with at most $N$ facets. In the proof given in \cite{LSW}, they proved and used the fact that for any best-approximating polytope $P_N^*$ with at most $N$ facets, we have $\vol_{d-1}(F\cap B_d)=\vol_{d-1}(F\cap B_d^c)$ for all $F\in\mathcal{F}_{d-1}(P_N^*)$; as a consequence, $\vol_{d-1}(\partial P_N^*\cap B_d)=\vol_{d-1}(\partial P_N^*\cap B_d^c)=\frac{1}{2}\vol_{d-1}(\partial P_N^*)$. 

Let $P_N$ be any polytope with at most $N$ facets whose facets all meet the interior of  $B_d$. Observe that, with $h_F=1-\dist(o,F)$, we have
\begin{equation}\label{LSW-1}
    \vol_d(P_N\triangle B_d) \geq \frac{1}{d}\sum_{F\in\mathcal{F}_{d-1}(P_N)}h_F\vol_{d-1}(F\cap B_d).
\end{equation}
Since each facet of $P_N$ meets the interior of $B_d$, we have $h_F> 0$. Let $r_F:=\left(\frac{\vol_{d-1}(F\cap B_d)}{\kappa_{d-1}}\right)^{\frac{1}{d-1}}$ denote the volume radius of $F\cap B_d$. Let $\widetilde{h}_F$ denote the height of the cap whose base is parallel to $F$ and has volume  $\vol_{d-1}(F\cap B_d)=r_F^{d-1}\kappa_{d-1}$. 
Note that  $\widetilde{h}_F\leq h_F$. Furthermore, $(1-\widetilde{h}_F)^2+r_F^2=1$, which implies 
\[
\widetilde{h}_F=1-\sqrt{1-r_F^2} \geq \frac{1}{2}r_F^2 =\frac{1}{2}\left(\frac{\vol_{d-1}(F\cap B_d)}{\kappa_{d-1}}\right)^{\frac{2}{d-1}},
\]
where in the first inequality we used $\sqrt{1-x^2}\leq 1-\frac{1}{2}x^2$ for $x\in[0,1]$. Thus, by \eqref{LSW-1} we get
\begin{align*}
     \vol_d(P_N\triangle B_d) \geq  \frac{1}{2d\kappa_{d-1}^{\frac{2}{d-1}}}\sum_{F\in\mathcal{F}_{d-1}(P_N)}(\vol_{d-1}(F\cap B_d))^{\frac{d+1}{d-1}}.
\end{align*}
By Stirling's formula, we have
\[
    \kappa_{d-1}^{\frac{2}{d-1}}
    =\frac{2\pi e}{d-1}\left(1+O\left(\frac{\ln d}{d}\right)\right).
\]
Hence,
\[
    \frac{1}{2d\kappa_{d-1}^{\frac{2}{d-1}}}
    =\frac{1}{4\pi e}\left(1-O\left(\frac{\ln d}{d}\right)\right).
\]
Using this and the previous estimate, we get
\begin{equation}\label{LSW-2}
     \vol_d(P_N\triangle B_d) \geq \frac{1}{4\pi e}\left(1-O\left(\frac{\ln d}{d}\right)\right)\sum_{F\in\mathcal{F}_{d-1}(P_N)}(\vol_{d-1}(F\cap B_d))^{\frac{d+1}{d-1}}.
\end{equation}
Next, by H\"older's inequality with conjugate exponents $p,q\geq 1$ satisfying $\frac{1}{p}+\frac{1}{q}=1$, 
\[
\sum_{F\in\mathcal{F}_{d-1}(P_N)}\vol_{d-1}(F\cap B_d) \leq \left(\sum_{F\in\mathcal{F}_{d-1}(P_N)}(\vol_{d-1}(F\cap B_d))^p\right)^{\frac{1}{p}}N^{\frac{1}{q}}.
\]
Choosing $p=\frac{d+1}{d-1}$ and $q=\frac{d+1}{2}$, we obtain
\[
\sum_{F\in\mathcal{F}_{d-1}(P_N)}\vol_{d-1}(F\cap B_d) \leq \left(\sum_{F\in\mathcal{F}_{d-1}(P_N)}(\vol_{d-1}(F\cap B_d))^{\frac{d+1}{d-1}}\right)^{\frac{d-1}{d+1}}N^{\frac{2}{d+1}}.
\]
Therefore,
\begin{align*}
\sum_{F\in\mathcal{F}_{d-1}(P_N)}(\vol_{d-1}(F\cap B_d))^{\frac{d+1}{d-1}} &\geq \left(\sum_{F\in\mathcal{F}_{d-1}(P_N)}\vol_{d-1}(F\cap B_d)\right)^{\frac{d+1}{d-1}}\NN\\
&=(\vol_{d-1}(\partial P_N\cap B_d))^{\frac{d+1}{d-1}}\NN.\\
\end{align*}
Combining the previous estimate with \eqref{LSW-2}, we get
\[
\vol_d(P_N\triangle B_d)>\frac{1}{4\pi e}\left(1-O\left(\frac{\ln d}{d}\right)\right)\vol_{d-1}(\partial P_N\cap B_d)^{\frac{d+1}{d-1}}\NN,
\]
as desired.

For the second statement, note that by  the assumption that $\vol_{d-1}(\partial P_N\cap B_d)\geq c\cdot\omega_{d}$, we have
\[
(\vol_{d-1}(\partial P_N\cap B_d))^{\frac{d+1}{d-1}}\NN\geq c^{\frac{d+1}{d-1}}\omega_{d}^{\frac{d+1}{d-1}}\NN.
\]Therefore, combining the last estimate with \eqref{LSW-2} and using the identity $\omega_{d}=d\kappa_d$, we get
\begin{align*}
    \vol_d(B_d\triangle P_N) &> \frac{c^{\frac{d+1}{d-1}}}{4\pi e}\omega_{d}^{\frac{d+1}{d-1}}\left(1-O\left(\frac{\ln d}{d}\right)\right)\NN\\
    &=\frac{c^{\frac{d+1}{d-1}}}{4\pi e}d^{\frac{2}{d-1}}\cdot d\kappa_d^{\frac{2}{d-1}}\cdot \kappa_d\left(1-O\left(\frac{\ln d}{d}\right)\right)\NN.
\end{align*}
Finally, note that $\frac{c^{\frac{d+1}{d-1}}}{4\pi e} d^{\frac{2}{d-1}} \geq c_1>0$ for some absolute constant $c_1$, and by Stirling's inequality, 
$d\kappa_d^{\frac{2}{d-1}}\geq c_2>0$ for some absolute constant $c_2$. Therefore, 
\[
\vol_d(P_N\triangle B_d)\geq c_3\kappa_d\left(1-O\left(\frac{\ln d}{d}\right)\right)\NN
\]
for some absolute constant $c_3>0$. \qed

\subsection{Proof of Theorem \ref{mainThm-2}}

We will need  \cite[Proposition 10]{HSW} and \cite[Proposition 12]{HSW}, which we state here for convenience.

\begin{lemma}[\cite{HSW}]\label{HSW-props}
\begin{itemize}
    \item[(i)] For all $d\in\mathbb{N}$ with $d\geq 2$, all $N\in\mathbb{N}$, and all polytopes $P_N$ in $\R^d$ with at most $N$ facets, 
    \[
        \vol_{d-1}(\partial B_d\cap P_N^c) - \vol_{d-1}(\partial P_N\cap B_d) \geq \frac{c_1(\vol_{d-1}(B_d\cap \partial P_N))^{\frac{d+1}{d-1}}}{\kappa_{d-1}^{\frac{2}{d-1}}}\NN.
    \]

    \item[(ii)] For all $d\in\mathbb{N}$ with $d\geq 2$, all $N\in\mathbb{N}$, and all polytopes $P_N$ in $\R^d$ with at most $N$ facets and $o\in{\rm int}(P_N)\subset 2B_d$,
    \[
    \vol_{d-1}(\partial P_N\cap B_d^c)-\vol_{d-1}(\partial B_d\cap P_N) \geq \frac{c_2(\vol_{d-1}(\partial P_N\cap B_d^c))^{\frac{d+1}{d-1}}}{\kappa_{d-1}^{\frac{2}{d-1}}}\NN.
    \]
\end{itemize}\end{lemma}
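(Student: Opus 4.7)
The plan is to prove both inequalities by a facet-by-facet analysis comparing surface elements on $\partial P_N$ with those on $\partial B_d$, and then aggregating via Hölder's inequality exactly as in the proof of Lemma~\ref{LSW-lower-bd}. The underlying bijection is the radial projection $R:\R^d\setminus\{o\}\to\Sp$, $x\mapsto x/\|x\|_2$: for any direction $u\in\Sp$, $u\in P_N^c$ iff the ray from $o$ through $u$ hits $\partial P_N$ strictly inside $B_d$. Hence $R$ restricts to a measure-preserving bijection (modulo lower-dimensional sets) from $\partial P_N\cap B_d$ onto $\partial B_d\cap P_N^c$, and analogously from $\partial P_N\cap B_d^c$ onto $\partial B_d\cap P_N$; on a facet $F$ with outer unit normal $\xi_F$ and $h_F:=\dist(o,F)$, the Jacobian of $R$ at $x\in F$ equals $h_F/\|x\|_2^d$.

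For part (i), I would organize the LHS as $\sum_F\int_{F\cap B_d}(h_F/\|x\|_2^d - 1)\,dx$, summing over facets with $h_F<1$. Introducing polar coordinates in the hyperplane $H_F$ centered at the foot $h_F\xi_F$, the integrand depends only on $\rho := \|x-h_F\xi_F\|_2$, and the substitution $\rho = h_F\tan\psi$ reduces the integral over the full disk $D_F := H_F\cap B_d$ (of radius $r_F := \sqrt{1-h_F^2}$) to $\vol_{d-2}(\mathbb{S}^{d-2})\int_0^{\arcsin r_F}\sin^{d-2}\psi(1-\cos\psi)\,d\psi$. Applying $1-\cos\psi\geq \tfrac{1}{2}\sin^2\psi$ and the change of variable $t = \sin\psi$ to bound $\int_0^{\arcsin r_F}\sin^d\psi\,d\psi \geq r_F^{d+1}/(d+1)$ yields a per-full-disk lower bound of the form $c\,\vol_{d-1}(B_{d-1})\,r_F^{d+1}$, which rewrites as
\[
c\,\frac{(\vol_{d-1}(D_F))^{(d+1)/(d-1)}}{\vol_{d-1}(B_{d-1})^{2/(d-1)}}
\]
with $c$ an absolute constant. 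Summing over facets (after controlling the discrepancy between $F\cap B_d$ and $D_F$) and applying Hölder's inequality in the form $\sum_F a_F^{(d+1)/(d-1)}\geq N^{-2/(d-1)}(\sum_F a_F)^{(d+1)/(d-1)}$ completes part (i).

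For part (ii), the same framework applies to $A_F := F\cap B_d^c$, but now the integrand is $1 - h_F/\|x\|_2^d$, which is nonnegative on $A_F$ since $\|x\|_2\geq 1$ there. The extra hypothesis $o\in\operatorname{int}(P_N)\subset 2B_d$ gives $h_F\leq 2$ and $\|x\|_2\leq 2$ throughout $A_F$, uniformly bounding the Jacobian $h_F/\|x\|_2^d$ strictly below $1$. An analogous polar-coordinate calculation in the outer region $\rho\in[r_F,\sqrt{4-h_F^2}]$ (with $r_F := 0$ when $h_F\geq 1$), together with a lower bound on $1-h_F/(h_F^2+\rho^2)^{d/2}$ valid on the compact parameter region $h_F\in(0,2]$ guaranteed by $P_N\subset 2B_d$, gives the per-facet estimate
\[
\vol_{d-1}(A_F) - \vol_{d-1}(R(A_F))\geq c\,\frac{(\vol_{d-1}(A_F))^{(d+1)/(d-1)}}{\vol_{d-1}(B_{d-1})^{2/(d-1)}},
\]
after which Hölder's inequality yields part (ii).

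The main obstacle will be turning these per-facet geometric calculations into rigorous per-facet lower bounds despite the fact that the regions of integration are not, in general, full round disks or round annuli. In part (i) the integrand $h_F/\|x\|_2^d - 1$ is positive near the foot and negative near $\|x\|_2 = 1$, so when $F\cap B_d$ is a proper subset of $D_F$ (because the polytope's combinatorics cut off the facet before the full disk is exhausted) the per-facet contribution can be negative; the estimate then has to be recovered either by using the full disk $D_F$ in Hölder's sum and controlling the discrepancy, or by reassigning the negative contributions across neighboring facets that share the missing radial directions. In part (ii) the same difficulty is compounded by the arbitrary polygonal outer boundary of $A_F$, and the hypothesis $P_N\subset 2B_d$ is essential—without it a facet nearly tangent to $\partial B_d$ with a thin polygonal extension outside the ball could carry arbitrary area while having negligible radial distortion, which would invalidate any bound of the stated form.
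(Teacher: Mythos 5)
The paper does not actually prove Lemma~\ref{HSW-props}; it is quoted verbatim from \cite{HSW} (Propositions~10 and~12 there), so there is no internal argument to compare your sketch against. Judged on its own terms, your outline has two gaps that are not peripheral. First, part~(i) carries no position hypothesis on $P_N$, yet your opening identity $\vol_{d-1}(\partial B_d\cap P_N^c)-\vol_{d-1}(\partial P_N\cap B_d)=\sum_F\int_{F\cap B_d}(h_F/\|x\|_2^d-1)\,dx$ relies on the radial map $R$ being a measure-preserving bijection from $\partial P_N\cap B_d$ onto $\partial B_d\cap P_N^c$, which requires $o\in\operatorname{int}(P_N)$: otherwise a point $x\in F\cap B_d$ need not have $x/\|x\|_2\in P_N^c$, and the identity fails. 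Since $B_d$ is anchored at the origin, you cannot translate $P_N$ to arrange this, so either a reduction to $o\in\operatorname{int}(P_N)$ must be justified or a genuinely different decomposition is needed.

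Second, and this is the decisive obstruction, the per-facet lower bound you want simply does not hold. The integrand $h_F/\|x\|_2^d-1$ changes sign on $D_F$, being negative where $\|x\|_2>h_F^{1/d}$; if $F\cap B_d$ is an annular piece of $D_F$ concentrated near $\partial B_d$, the integral $\int_{F\cap B_d}(h_F/\|x\|_2^d-1)\,dx$ is \emph{strictly negative}, so there is no constant $c>0$ for which a per-facet bound of the form $c\,\vol_{d-1}(F\cap B_d)^{(d+1)/(d-1)}/\vol_{d-1}(B_{d-1})^{2/(d-1)}$ can hold. You flag this yourself and suggest ``controlling the discrepancy'' against the full disk or ``reassigning negative contributions,'' but neither is developed, and once the summands can be negative the Hölder step does not go through. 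A working argument has to be organized so that each summand is nonnegative with the right exponent; note that the appendix proof of Lemma~\ref{LSW-lower-bd} for the symmetric-difference version achieves this by replacing the signed radial distortion with the cap height $\widetilde h_F$ (which is intrinsically nonnegative and satisfies $\widetilde h_F\geq\tfrac12 r_F^2$), and the surface-area argument in \cite{HSW} similarly avoids a signed per-facet radial comparison. Analogous care is also needed in part~(ii): there the integrand $1-h_F/\|x\|_2^d$ is nonnegative on $F\cap B_d^c$ (your justification ``since $\|x\|_2\geq 1$'' only covers $h_F\leq 1$; for $h_F>1$ you need $\|x\|_2\geq h_F\geq h_F^{1/d}$), but it is small when $\|x\|_2\approx h_F$, so a lower bound depending only on $\vol_{d-1}(F\cap B_d^c)$ still has to be shown to hold uniformly over the shape of that region, which your sketch does not do.
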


The first step is to extend this result from $k=d-1$ to all $k\in\{\lceil\tfrac{d}{2}\rceil-1,\ldots, d-1\}$ as follows.

\begin{corollary}\label{cor1}
\begin{itemize}
    \item[(i)] For all $d\in\mathbb{N}$ with $d\geq 2$ and all $k\in\{\lceil\tfrac{d}{2}\rceil-1,\ldots, d-1\}$, all $M\in\mathbb{N}$, and all polytopes $P_M$ in $\R^d$ with at most $M$ $k$-faces, 
    \[
        \vol_{d-1}(\partial B_d\cap P_M^c) - \vol_{d-1}(\partial P_M\cap B_d) \geq \frac{c_1(\vol_{d-1}(B_d\cap \partial P_M))^{\frac{d+1}{d-1}}}{\kappa_{d-1}^{\frac{2}{d-1}}}M^{-\frac{2}{d-1}}.
    \]

    \item[(ii)] For all $d\in\mathbb{N}$ with $d\geq 2$ and all $k\in\{\lceil\tfrac{d}{2}\rceil-1,\ldots, d-1\}$, all $M\in\mathbb{N}$, and all polytopes $P_M$ in $\R^d$ with at most $M$ $k$-faces and $o\in{\rm int}(P_M)\subset 2B_d$,
    \[
    \vol_{d-1}(\partial P_M\cap B_d^c)-\vol_{d-1}(\partial B_d\cap P_M) \geq \frac{c_2(\vol_{d-1}(\partial P_M\cap B_d^c))^{\frac{d+1}{d-1}}}{\kappa_{d-1}^{\frac{2}{d-1}}}M^{-\frac{2}{d-1}}.
    \]
\end{itemize}\end{corollary}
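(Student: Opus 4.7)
The plan is to deduce both parts of the corollary directly from Lemma \ref{HSW-props} by invoking the combinatorial inequality \eqref{facets-bd}, which bounds the number of facets of a polytope in terms of its number of $k$-faces for $k$ in the specified range. This is essentially the same reduction strategy used elsewhere in the paper (for instance, in the proof of Theorem \ref{inscribed-mw-cor-kfaces} and in Lemma \ref{inclusion-chain}).

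Concretely, fix $k \in \{\lceil d/2 \rceil - 1, \ldots, d-1\}$ and let $P_M$ be any polytope in $\R^d$ with at most $M$ $k$-faces. By the hypothesis on $k$, inequality \eqref{facets-bd} applies and gives
\[
f_{d-1}(P_M) \leq f_k(P_M) \leq M,
\]
so $P_M$ has at most $M$ facets. For part (i), I would then apply Lemma \ref{HSW-props}(i) to $P_M$ with the parameter $N$ set equal to $M$; this immediately produces
\[
\vol_{d-1}(\partial B_d \cap P_M^c) - \vol_{d-1}(\partial P_M \cap B_d) \geq \frac{c_1 (\vol_{d-1}(B_d \cap \partial P_M))^{\frac{d+1}{d-1}}}{(\vol_{d-1}(B_{d-1}))^{\frac{2}{d-1}}} M^{-\frac{2}{d-1}},
\]
which is the claim. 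For part (ii), the extra hypotheses $o \in \operatorname{int}(P_M) \subset 2B_d$ transfer verbatim to the corresponding hypothesis of Lemma \ref{HSW-props}(ii), so the same substitution $N = M$ yields the desired bound.

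There is really no new analytic or geometric obstacle to overcome here: the hard work is already contained in Lemma \ref{HSW-props} (i.e., Propositions 10 and 12 of \cite{HSW}), and the only nontrivial ingredient beyond that is the combinatorial bound \eqref{facets-bd} of Hinman. The restriction to $k \geq \lceil d/2 \rceil - 1$ is forced precisely by the range in which \eqref{facets-bd} holds; outside that range one cannot control $f_{d-1}(P_M)$ by $f_k(P_M)$ and the reduction fails, which is why the corollary (and hence Theorem \ref{mainThm-general}) is stated only for $k \in \{\lceil d/2 \rceil - 1, \ldots, d-1\}$.
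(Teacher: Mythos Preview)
Your proposal is correct and matches the paper's own proof essentially verbatim: the paper likewise observes that \eqref{facets-bd} gives $f_{d-1}(P_M)\leq f_k(P_M)\leq M$ and then invokes Lemma \ref{HSW-props} with this facet bound. There is nothing to add.
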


\begin{proof}
    Since $f_k(P_M)\leq M$, we have $N=f_{d-1}(P_M)\leq f_k(P_M)\leq M$ by \eqref{facets-bd}. Thus the result follows immediately from  Lemma \ref{HSW-props} and another application of \eqref{facets-bd}.
\end{proof}

Assume that $o\in{\rm int}(P_M)\subset 2B_d$. Theorem \ref{mainThm-general} now follows from Corollary \ref{cor1} and H\"older's inequality:
\begin{align*}
    \Delta_s(B_d,P_M) &\geq  \vol_{d-1}(\partial B_d\cap P_M^c) - \vol_{d-1}(\partial P_M\cap B_d)
    + \vol_{d-1}(\partial P_M\cap B_d^c)-\vol_{d-1}(\partial B_d\cap P_M)\\
    &\geq \frac{c_1(\vol_{d-1}(B_d\cap \partial P_M))^{\frac{d+1}{d-1}}}{\kappa_{d-1}^{\frac{2}{d-1}}}M^{-\frac{2}{d-1}}
    +\frac{c_2(\vol_{d-1}(\partial P_M\cap B_d^c))^{\frac{d+1}{d-1}}}{\kappa_{d-1}^{\frac{2}{d-1}}}M^{-\frac{2}{d-1}}\\
    &\geq \frac{c_3(\vol_{d-1}(\partial P_M))^{\frac{d+1}{d-1}}}{\kappa_{d-1}^{\frac{2}{d-1}}}M^{-\frac{2}{d-1}}.
\end{align*}\qed

\bibliographystyle{plain}
\bibliography{main}


\vspace{3mm}
\noindent
Steven Hoehner\\
\noindent {\sc Department of Mathematics \& Computer Science, Longwood University, U.S.A.}\\
\noindent {\it E-mail address:} {\tt hoehnersd@longwood.edu}
\vskip 3mm
\noindent
Carsten Sch\"utt\\
\noindent {\sc Department of Mathematics, University of Kiel, Heinrich-Hecht-Platz 6, 24118 Kiel, Germany} \\
\noindent {\it E-mail address:} {\tt schuett@math.uni-kiel.de} 
\vskip 3mm
\noindent
Elisabeth Werner\\
\noindent {\sc Department of Mathematics, Case Western Reserve University, 2145 Adalbert Road, Cleveland, OH 44106, U.S.A.}\\
\noindent {\it E-mail address:} {\tt elisabeth.werner@case.edu}

\end{document}